\newcolumntype{?}{!{\vrule width 1pt}}
\newtheorem{theorem}{Theorem}[section]
\newtheorem{lemma}[theorem]{Lemma}
\newtheorem{corollary}[theorem]{Corollary}
\theoremstyle{definition}
\theoremstyle{conjecture}
\theoremstyle{example}
\newcommand{\loc}[1]{\zeta(#1)}
\begin{document}
\title{Pursuit-evasion games on latin square graphs}
\author[S.\ Ahirwar]{Shreya Ahirwar}
\author[A.\ Bonato]{Anthony Bonato}
\author[L.\ Gittins]{Leanna Gittins}
\author[A.\ Huang]{Alice Huang}
\author[T.G.\ Marbach]{Trent G.\ Marbach}
\author[T.\ Zaidman]{Tomer Zaidman}
\address[A1]{Department of Mathematics \& Statistics, Mount Holyoke College, South Hadley, U.S.A.}
\address[A2, A5]{Department of Mathematics, Ryerson University, Toronto, Canada,  M5B 2K3.}
\address[A3]{Department of Mathematics \& Statistics, McMaster University, Hamilton, Canada, L8S 4K1}
\address[A4, A6]{Department of Mathematics, University of Toronto, Toronto, Canada,  M5S 2E4.}
\email[A1]{(A1) ahirw22s@mtholyoke.edu}
\email[A2]{(A2) abonato@ryerson.ca}
\email[A3]{(A3) leanna.gittins@gmail.com}
\email[A4]{(A4) alicew.huang@mail.utoronto.ca}
\email[A5]{(A5) trent.marbach@gmail.com}
\email[A6]{(A6) tomer.zaidman@mail.utoronto.ca}

\begin{abstract}
We investigate various pursuit-evasion parameters on latin square graphs, including the cop number, metric dimension, and localization number. The cop number of latin square graphs is studied, and for $k$-MOLS$(n),$ bounds for the cop number are given. If $n>(k+1)^2,$ then the cop number is shown to be $k+2.$ Lower and upper bounds are provided for the metric dimension and localization number of latin square graphs. The metric dimension of back-circulant latin squares shows that the lower bound is close to tight. Recent results on covers and partial transversals of latin squares provide the upper bound of $n+O\left(\frac{\log{n}}{\log{\log{n}}}\right)$ on the localization number of a latin square graph of order $n.$
\end{abstract}

\keywords{Latin squares, graphs, mutually orthogonal latin squares, cop number, metric dimension, localization number}
\subjclass{05C57,05B15}

\maketitle

\section{Introduction}

Pursuit-evasion games, including the well-known game of Cops and Robbers and the localization game, are combinatorial models for detecting or neutralizing an
adversary's activity on a graph. In such models, pursuers attempt to capture an evader loose on the vertices of a graph. How the players move and the rules
of capture depend on which variant is studied. Such games are motivated by foundational topics in computer science, discrete mathematics, and
artificial intelligence, such as robotics and network security.  For surveys of pursuit-evasion games, see~\cite{bp,by,nisses}, and see \cite{BN} for more background on Cops and Robbers.

In Cops and Robbers, the pursuers are \emph{cops} and the evader is the \emph{robber}. Both players move on vertices. The cops move first, followed by the robber; the players then alternate moves. The robber is visible, and players move to adjacent vertices or remain on their current vertex. The cops win if, after a finite number of rounds, they can land on the vertex of the robber; otherwise, the robber wins. The least number of cops needed to guarantee that the robber is captured on a graph $G$ is the \emph{cop number} of $G,$ denoted by $c(G).$ Note that $c(G)$ is well-defined, as $c(G) \le \gamma(G),$ where $\gamma(G)$ is the domination number of $G.$ For more background on the cop number of a graph, see \cite{BN}.

In the \emph{localization game}, the robber moves first and is invisible to the cops during gameplay. As in Cops and Robbers, the robber occupies vertices and moves between vertices along edges. On their turn, the cops may move to any vertex of the graph. After each move, the cops occupy a set of vertices $u_1, u_2, \dots , u_k$ and each cop sends out a \emph{cop probe}, which gives their distance $d_i$, where $1\le i \le k$, from $u_i$ to the robber's vertex. The distances $d_i$ are nonnegative integers or may be $\infty.$ Hence, in each round, the cops determine a \emph{distance vector} $D=(d_1, d_2, \dots , d_k)$ of cop probes. The cops win if they have a strategy to determine, after a finite number of rounds, the vertex that the robber occupies, at which time we say that the cops {\em capture} the robber. We assume the robber is omniscient, in the sense that they know the entire strategy for the cops. The \emph{localization number} of a graph $G$, written $\loc{G}$, is the least positive integer $k$ for which $k$ cops have a winning strategy.

The minimum number of cops needed to win in the first round (that is, using only one set of cop probes in round 0) is the \emph{metric dimension}, written $\beta(G).$ Observe that $\loc{G} \le \beta(G) \le |V(G)|.$ A survey on metric dimension and related concepts may be found in \cite{bail}, and a recent literature review on the localization number may be found in \cite{BHM}.

The present paper is the first to consider the cop number, localization number, and metric dimension of graphs arising from latin squares. For a positive integer $n,$ a \emph{latin square} of order $n$ is an $n \times n$ array of cells with each cell containing a symbol from a set $S$ with $|S|=n,$ such that each symbol occurs exactly once in each row and in each column.
Often rows are indexed by $R$,  columns are indexed by $C$, and symbols are indexed by $S$. For a latin square $L$, we write its set of \emph{entries} as $$\{(r,c,s) \in R \times C \times S : \text{ symbol $s$ occurs in row $r$ and column $c$ of $L$}\}.$$
We will take $R=C=S=[n] = \{1,2, \ldots, n\}$.  We call the elements of $R$ the \emph{row-indices}, of $C$ the \emph{column-indices}, and of $S$ the \emph{symbol-indices}. The elements of $R \cup C \cup S$ will be known as the \emph{indices}.
Define the \emph{row-line} (or more simply, the row) of a row-index $r$ as the subset of $n$ entries of $L$ that contain $r$, and analogously define \emph{column-line} and \emph{symbol-line}. Each of these is called simply a \emph{line}. Given a latin square $L,$ we denote the symbol in row $r$ and column $c$ by $L[r,c].$

The \emph{latin square graph} of a latin square $L$ of order $n,$ written as $G(L)$, is the graph with $n^2$ vertices labeled with the cells of the latin square, where distinct vertices are adjacent if they share a row, column, or symbol.  See Figure~\ref{fig1} for the graph corresponding to the following latin square of order 3:
$$L_3 = \begin{tabular}{|c|c|c|}
\hline 1 & 2 & 3 \\ \hline
2 & 3 & 1 \\ \hline
3 & 1 & 2 \\ \hline
\end{tabular}.
$$
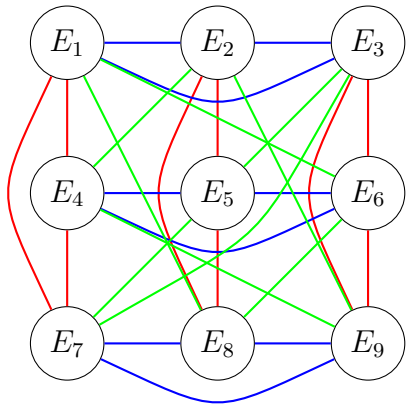
\begin{figure}[h!]
\begin{center}
\begin{tikzpicture}[ node distance={20mm}, main/.style = {draw, circle}]
\node[main] (1) {$E_7$};
\node[main] (2) [above of=1] {$E_4$};
\node[main] (3) [above of=2] {$E_1$};
\node[main] (4) [right of=1] {$E_8$};
\node[main] (5) [above of=4] {$E_5$};
\node[main] (6) [above of=5] {$E_2$};
\node[main] (7) [right of=4] {$E_9$};
\node[main] (8) [above of=7] {$E_6$};
\node[main] (9) [above of=8] {$E_3$};
\draw[color=red, thick] (1) -- (2);
\draw[color=red, thick] (1) to [out=115,in=245, looseness=1.5] (3) ;
\draw[color=red, thick] (2) -- (3);
\draw[color=red, thick] (4) -- (5);
\draw[color=red, thick] (4) to [out=115,in=245, looseness=1.5] (6) ;
\draw[color=red, thick] (5) -- (6);
\draw[color=red, thick] (7) -- (8);
\draw[color=red, thick] (7) to [out=115,in=245, looseness=1.5] (9) ;
\draw[color=red, thick] (8) -- (9);
\draw[color=blue, thick] (1) -- (4);
\draw[color=blue, thick] (1) to [out=335,in=205, looseness=1.5] (7) ;
\draw[color=blue, thick] (4) -- (7);
\draw[color=blue, thick] (2) -- (5);
\draw[color=blue, thick] (2) to [out=335,in=205, looseness=1.5] (8) ;
\draw[color=blue, thick] (5) -- (8);
\draw[color=blue, thick] (3) -- (6);
\draw[color=blue, thick] (3) to [out=335,in=205, looseness=1.5] (9) ;
\draw[color=blue, thick] (6) -- (9);
\draw[color=green, thick] (1) -- (5);
\draw[color=green, thick] (1) to [out=30,in=240, looseness=1.5] (9) ;
\draw[color=green, thick] (5) -- (9);
\draw[color=green, thick] (2) -- (6);
\draw[color=green, thick] (2) --(7); 
\draw[color=green, thick] (6) -- (7);
\draw[color=green, thick] (3) -- (4);
\draw[color=green, thick] (3) --(8); 
\draw[color=green, thick] (4) -- (8);
\end{tikzpicture}
\caption{The graph arising from the latin square $L_3,$ where blue edges come from rows, red edges from columns, and green edges from symbols. Cells are labeled $E_i,$ where $1\le i \le 9.$}\label{fig1}
\end{center}
\end{figure}

We may also consider graphs derived from mutually orthogonal latin squares. A pair of latin squares 
$A$ and $B$
of order $n$ are \emph{orthogonal} if the $n^2$ pairs $(A[i,j],B[i,j])$ 
are distinct. For positive integers $n$ and $k$, a set of $k$ latin squares of order $n$ are \emph{mutually orthogonal}, written $k$-MOLS$(n)$, if the latin squares in the set are pairwise orthogonal. We may write an entry of a $k$-MOLS$(n)$ as $(r,c,s_1,s_2,\ldots ,s_k),$ where $s_i$ is a symbol from the symbol set of the $i$th latin square and $1\le i \le k.$ The maximum number of pairwise orthogonal latin squares is $k=n-1$. The existence of a set of $(n-1)$-MOLS$(n)$ is equivalent to the existence of a (finite) projective plane of order $n$ and an affine plane of order $n$; see \cite{col}. If $\mathcal{L}$ is a set of $k$-MOLS$(n),$ then define the \emph{latin square graph} of $\mathcal{L},$ written $G(\mathcal{L})$, to be the graph with $n^2$ vertices labeled with the cells of the latin square, where distinct vertices are adjacent if the corresponding cells in the latin square share a row, a column, or a symbol from any of the $k$ symbol sets. In the case $k=1,$ these are the latin square graphs. The graph $G(\mathcal{L})$ is $(k+2)(n-1)$-regular.

The cop number of graphs arising from combinatorial designs was studied in \cite{bb}, where bounds and exact values were determined for incidence graphs of designs, polarity graphs, block intersection graphs, and point graphs. That study was partially motivated by the search for new examples of so-called Meyniel extremal families of graphs, which have the conjectured largest asymptotic value of the cop number for connected graphs; see \cite{bbb}. For a latin square graph of order $n,$ the domination number (which upper bounds the cop number) is bounded between $n/2$ and $n,$ but an exact value is not known; see \cite{PPT}. The localization number and metric dimension of designs were studied in \cite{BHM}, where these parameters were studied for incidence graphs of various balanced incomplete block designs such as projective planes, affine planes, and Steiner systems.

The present paper is organized as follows. In Section~\ref{seccn}, we consider the cop number of latin square graphs arising from $k$-MOLS$(n).$ For many instances of the parameters $k$ and $n,$ including the case $k=1,$ we determine the exact value of the cop number. In the remaining cases, we give bounds on the cop number. The metric dimension of latin square graphs is discussed in the next section, and bounds are presented. In particular, for a latin square $L$ of order $n,$ we derive in Theorem~\ref{mdmain} that $n-\sqrt{\frac{n}{3}+\frac{37}{36}}+ \frac{1}{6} \le \beta(G(L)).$ For the family of back-circulant latin squares, we derive that for $n$ sufficiently large with $2,3,5,7 \nmid n$, $\beta(G(B_n))\leq n-1,$ which proves that the lower bound in Theorem~\ref{mdmain} is close to tight. In Section~\ref{secln}, bounds are provided for the localization number of latin square graphs. By using the existence of certain partial transversals, we derive for a latin square of order $n$ that $\zeta(G(L)) \leq n+O\left(\frac{\log{n}}{\log{\log{n}}}\right)$; see Corollary~\ref{cormain}. Our final section presents several open problems on pursuit-evasion on latin square graphs.

Throughout, all graphs considered are simple, undirected, connected, and finite. For a general reference on graph theory, see~\cite{West}.  For background on latin squares, see \cite{col,tm,stinson}.  Unless otherwise stated, $k$ and $n$ are positive integers.

\section{Cop number of latin squares}\label{seccn}

For latin squares of small orders, the cop number of their graphs may be directly computed. By directly checking, the cop number of a latin square of order $1$ or $2$ is 1, order $3$ is $2,$ and order $4$ is 3. Interestingly, the cop number of latin squares equals 3 for all $n\ge 5,$ as we now demonstrate.

We first consider the cop number in the more general setting of MOLS.
\begin{theorem}\label{lem:CN_LSG_upper}
If $\mathcal{L}$ is a set of $k${\rm-MOLS}$(n)$, then we have that
\[
c(G(\mathcal{L})) \leq  k+2.
\]
\end{theorem}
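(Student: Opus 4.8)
The plan is to show that $k+2$ cops suffice by having them occupy a structured set of cells that ``covers'' every line of the MOLS, thereby forcing the robber into a position where the next cop move captures. The natural first move is to place the cops on a single row-line, say the $n$ cells of row $r_0$; then every column-line and every symbol-line (in each of the $k$ symbol sets) meets this row exactly once, so after the cops are on row $r_0$ the robber, wherever they sit, is adjacent to some cop unless the robber is also on row $r_0$. But $n$ cops is far too many, so the real idea must be that $k+2$ cops can \emph{simulate} control of a line by exploiting the fact that in $G(\mathcal{L})$ the closed neighbourhood of a single cell is the union of $k+2$ lines through that cell (one row, one column, $k$ symbols), and any two cells lie on at most one common line.

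First I would set up the following invariant: after the cops' move in some round, there is a cell $v$ (a ``target'' cell) such that the robber is known to lie in $N[v]$, i.e.\ on one of the $k+2$ lines through $v$. With $k+2$ cops one can then station one cop on each of these $k+2$ lines in such a way that the robber cannot cross from one line to another without being caught: because two distinct lines through $v$ meet only at $v$ itself, the robber's only escape from line $\ell$ to line $\ell'$ passes through $v$, which a cop can guard. Concretely, I would have the $k+2$ cops take positions on the $k+2$ lines through $v$, each cop shadowing the robber along ``its'' line (using the fact that a single cop restricted to a line, which induces a clique $K_n$, can maintain the robber's projection onto that line). Then either the robber stays within $N[v]$ forever, in which case the cop on whichever line the robber occupies eventually lands on the robber, or the robber leaves $N[v]$ through the unique gateway vertex $v$ — but a cop is placed on $v$, contradiction. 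The initialization of the invariant is easy: on the cops' first move, put one cop anywhere; the robber is somewhere; pick $v$ to be the robber's current cell — no, the cops don't see... actually the robber is visible in Cops and Robbers, so the cops simply move a cop adjacent to the robber and set $v$ to be the robber's cell, establishing the invariant after round $0$ (or we use a preliminary round to drive the robber into such a configuration, which is immediate since the graph has diameter $2$).

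The main obstacle I expect is making the ``shadowing along a line'' argument fully rigorous when the lines share the vertex $v$: I need to guarantee that the $k+2$ cops can \emph{simultaneously} maintain their projections onto their respective lines while also keeping $v$ guarded, and that a cop whose line the robber currently occupies can actually decrease its distance to the robber each round rather than merely matching it. The clean way is to note that within a line (a clique $K_n$) a cop that is \emph{on} that line and whose target is also on that line can capture in one move; so the cop on the robber's current line, being already on that line, captures immediately — the subtlety is only in handling the round in which the robber moves off one line onto another, where I use the shared-vertex structure and the fact that diameter $2$ means the ``new'' line's cop is at distance at most... here I may instead argue more carefully that the robber, to be safe, must at every step be on a line containing no cop, and count: there are $k+2$ lines through $v$, all occupied, so no safe line remains. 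I would also need to verify the base cases $n \le 4$ are consistent (they are, since the theorem only claims an upper bound), and to double-check the edge case where two of the ``lines through $v$'' could coincide — they cannot, since a row, a column, and $k$ distinct symbols give $k+2$ genuinely distinct lines through any cell of a MOLS.
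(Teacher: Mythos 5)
Your overall instinct --- use $k+2$ cops to cover the $k+2$ lines through a cell $v$ associated with the robber --- is the right one, but the mechanism you build around it rests on a false structural claim. You assert that because two distinct lines through $v$ meet only at $v$, the robber can only escape $N[v]$ ``through the unique gateway vertex $v$.'' This is not true: a robber sitting at a cell $w \neq v$ on, say, the row-line of $v$ has $(k+2)(n-1)$ neighbours, reached along the $k+2$ lines through $w$, and only one of those lines (the shared row) lies inside $N[v]$; the robber can leave $N[v]$ in a single step along its own column-line or any of its symbol-lines without ever visiting $v$. So the long-term invariant ``the robber remains in $N[v]$'' cannot be maintained by guarding $v$, and the shadowing/gateway architecture built on it collapses. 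That two lines meet in at most one cell bounds their intersection; it does not make $v$ a cut vertex of the (diameter-$2$, highly connected) graph $G(\mathcal{L})$ restricted to $N[v]$.

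What does work --- and is the paper's proof --- is a one-shot version of your idea that needs no maintained invariant: at the cops' turn, let $v=(r,c,s_1,\ldots,s_k)$ be the robber's current (visible) cell. Since the robber moves along edges, after its next move it is still in $N[v]$, i.e.\ on one of the $k+2$ lines through $v$. Hence it suffices that the cops can, in one move, place one cop on each of these lines; then after the robber moves, the cop on the line that the robber's new cell shares with $v$ is on or adjacent to the robber and captures on the following cop move. The step your proposal omits is the verification that this repositioning is always achievable in a single move against an adaptively moving robber: each cop's own row-line contains every column index and every symbol of every symbol set, and its column-line contains every row index, so each cop can slide along its current line to a cell carrying the particular index of $v$ assigned to it. Your closing remark (``there are $k+2$ lines through $v$, all occupied, so no safe line remains'') gestures at the correct finish, but as written the proposal leans on the incorrect gateway claim and lacks the one-move reachability argument, so it has a genuine gap.
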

\begin{proof}
Suppose that $k+2$ cops are at play, which we label as $C_1, C_2, \ldots, C_{k+2}$.  For their first move, the cops occupy arbitrary vertices. Suppose that the robber occupies the vertex $v = (r,c,s_1, \ldots, s_k).$ If there is a cop in row $r,$ column $c,$  or on a vertex containing the symbol $s_i,$ where $1\le i \le k,$ then the robber is captured in the next round. Hence, we assume this does not occur for any of the cops.

Note that every symbol (of all the symbol types) and every column is represented in the row-line of each cop. Hence, a cop can always move to a vertex containing one of $c,s_1,s_2, \ldots, s_k.$ A similar statement holds for other lines and so a cop can always move to a vertex containing $r$.
For $1 \leq i \leq k,$ the cop $C_i$ moves to a vertex containing symbol $s_i.$ The cop $C_{k+1}$ moves to its neighbor in row $r$ and cop $C_{k+2}$ moves to its  neighbor in column $c$.
Thus, each cop shares a unique index with $v$. Since each neighbor of the robber must share an index with its current vertex, if the robber moves, then it will be on a neighboring vertex to the cop that also shares this index. If the robber does not move, then it can be captured by any of the cops.
\end{proof}

If $n$ is sufficiently large compared to $k$, then the upper bound in Theorem~\ref{lem:CN_LSG_upper} has a matching lower bound.

\begin{theorem}\label{thm:CN_LSG_lower1}
Suppose that $n > (k+1)^2$. If $\mathcal{L}$ is a set of $k${\rm-MOLS}$(n)$, then
\[
c(G(\mathcal{L}))= k+2.
\]
\end{theorem}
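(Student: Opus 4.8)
The upper bound $c(G(\mathcal{L}))\le k+2$ is Theorem~\ref{lem:CN_LSG_upper}, so the task is to show that $k+1$ cops cannot win, which I plan to do by handing the robber an evasion strategy. The plan rests on the following structural features of $G(\mathcal{L})$. Every cell lies on exactly $k+2$ lines, one of each \emph{type}: its row-line, its column-line, and, for each of the $k$ squares, the line of cells sharing its symbol in that square; the $k+2$ types occurring at a fixed cell are pairwise distinct. Two lines of the same type are disjoint, whereas two lines of different types meet in exactly one cell --- for a row-line or column-line together with a symbol-line this is immediate from the latin square axioms, and for two symbol-lines of different squares it is exactly orthogonality. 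In particular, two distinct cells lie on at most one common line.

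The robber will maintain the invariant that, at the start of every cops' turn, no cop lies on a line through the robber's current cell $v$; equivalently, no cop is adjacent to, or equal to, $v$. This makes capture impossible, since capturing requires a cop adjacent to $v$ before it moves. To establish the invariant after the cops' initial placement, the robber needs a cell lying on no line through a cop, that is, a cell dominated by none of the $k+1$ cops. Such a cell exists because $k+1$ cops cannot dominate $G(\mathcal{L})$ when $n>(k+1)^2$: let $\mathcal{R}$ and $\mathcal{C}$ be the sets of rows and columns used by the cops, so $|\mathcal{R}|,|\mathcal{C}|\le k+1$; a cell lying in a row outside $\mathcal{R}$ and a column outside $\mathcal{C}$ can only be dominated via a symbol-line of some cop, but there are at most $k(k+1)$ such symbol-lines, each containing exactly $n-|\mathcal{R}|$ cells in rows outside $\mathcal{R}$, hence at most $n-|\mathcal{R}|$ cells of the $(n-|\mathcal{R}|)\times(n-|\mathcal{C}|)$ sub-array; since $n-|\mathcal{C}|\ge n-(k+1)>k(k+1)$, these symbol-lines cannot cover the sub-array, leaving an undominated cell.

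For the maintenance step, suppose the invariant holds with the robber on $v$ and the cops then move to $u_1',\dots,u_{k+1}'$. If no $u_i'$ is adjacent to $v$, the robber stays on $v$ and the invariant persists. Otherwise the cops adjacent to $v$ each lie on exactly one line through $v$, hence on at most $k+1$ of the $k+2$ such lines, so some line $\ell^*$ through $v$ --- of type $t$, say --- carries no cop, and the robber will move to a cell of $\ell^*\setminus\{v\}$. A target $w\in\ell^*\setminus\{v\}$ is unusable only if some $u_i'$ is adjacent to $w$. Fix $i$: as $u_i'\notin\ell^*$, the type-$t$ line through $u_i'$ misses $\ell^*$ and the remaining $k+1$ lines through $u_i'$ each meet $\ell^*$ in a single cell, so $u_i'$ is adjacent to at most $k+1$ cells of $\ell^*$; moreover, if $u_i'$ is adjacent to $v$, then one of those cells is $v$ itself, so $u_i'$ blocks at most $k$ cells of $\ell^*\setminus\{v\}$. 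With $a\ge 1$ cops adjacent to $v$, the number of blocked cells of $\ell^*\setminus\{v\}$ is therefore at most $ak+(k+1-a)(k+1)=(k+1)^2-a\le(k+1)^2-1<n-1=|\ell^*\setminus\{v\}|$, so a safe target remains; the robber moves there and the invariant is restored. Iterating, the robber is never captured, so $c(G(\mathcal{L}))\ge k+2$, and with Theorem~\ref{lem:CN_LSG_upper} we obtain $c(G(\mathcal{L}))=k+2$.

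The step I expect to be the main obstacle is the initial placement: the crude volume estimate on the domination number of $G(\mathcal{L})$ is too weak to force an undominated cell once $n$ only slightly exceeds $(k+1)^2$, and one must instead use that row-lines and column-lines are mutually disjoint to reduce the problem to covering a sub-array by symbol-lines alone. The same phenomenon is responsible for the exact threshold in the maintenance step: the bound $(k+1)^2$ surfaces only after noticing that a cop adjacent to $v$ blocks at most $k$, rather than $k+1$, of the escape cells on $\ell^*$.
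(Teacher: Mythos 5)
Your proposal is correct and follows essentially the same strategy as the paper's proof: the robber always has a cop-free line through its current cell (since $k+1$ cops lie on at most $k+1$ of the $k+2$ lines), and each cop is adjacent to at most $k+1$ cells of that line, so $n>(k+1)^2$ leaves a safe cell to move to. Your write-up is in fact slightly more careful than the paper's, since you explicitly verify that an undominated starting cell exists and refine the count to $k$ blocked cells for cops adjacent to $v$ (needed because you forbid the robber from staying put), whereas the paper allows the safe cell to be the robber's current cell and leaves the initial placement implicit.
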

\begin{proof}
The upper bound follows by Theorem~\ref{lem:CN_LSG_upper}.  For the lower bound, assume that $k+1$ cops are at play.
By the latin property, each cop is on at most one of the lines containing the robber. Independently of how the cops play, on each of turn of the robber, they are on a line that does not contain a cop, since each cop can be on at most one line, and there are $k+2$ lines incident with the robber.
During the next round, the robber will move along this line $\ell,$ and we proceed by showing that line $\ell$ contains a vertex such that the robber can move to this vertex without being captured. Suppose, without loss of generality, that $\ell$ is a row-line.

Note that if there is a vertex with corresponding entry $(r,c,s_1, \ldots, s_k)$, then each row-line has $k+1$ distinct entries containing indices $c,s_1, \ldots, s_k$, by the orthogonal property of the MOLS.
Thus, each cop is incident to exactly $k+1$ entries on $\ell,$ and so at most $(k+1)^2$ entries on this line are adjacent to cops. Since $n>(k+1)^2$, there is at least one entry on $\ell$ that is not adjacent to a cop, and the robber moves to such an entry. By repeating this strategy in subsequent rounds, the robber may avoid capture.
\end{proof}

We have the following immediate corollary in the case $k=1.$

\begin{corollary}
If $L$ is latin square of order $n\ge 5,$ then $c(G(L)) = 3.$
\end{corollary}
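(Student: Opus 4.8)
The corollary follows almost immediately from the two preceding theorems, so the plan is to verify that the hypothesis $n \ge 5$ meets the requirements of both results in the case $k = 1$. First I would apply Theorem~\ref{lem:CN_LSG_upper} with $k = 1$: a latin square is a set of $1$-MOLS$(n)$, so the upper bound gives $c(G(L)) \le 1 + 2 = 3$, valid for every order $n$. Next I would invoke Theorem~\ref{thm:CN_LSG_lower1}, again with $k = 1$; its hypothesis reads $n > (k+1)^2 = 4$, which is precisely $n \ge 5$. Hence for all $n \ge 5$ the lower bound $c(G(L)) \ge 3$ holds as well, and combining the two inequalities yields $c(G(L)) = 3$.

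The only thing requiring a sentence of care is the observation that the case $k = 1$ of the MOLS setup genuinely specializes to a single latin square, i.e.\ that $G(\mathcal{L})$ for $\mathcal{L} = \{L\}$ is exactly the latin square graph $G(L)$ as defined earlier in the paper; this is stated explicitly in the text (``In the case $k=1,$ these are the latin square graphs''), so no argument is needed. There is no real obstacle here: the corollary is a direct instantiation of the general bounds, and the boundary arithmetic $(1+1)^2 = 4 < 5$ is the whole content. I would write it as: by Theorem~\ref{lem:CN_LSG_upper} with $k=1$ we have $c(G(L)) \le 3$, and since $n \ge 5 > 4 = (1+1)^2$, Theorem~\ref{thm:CN_LSG_lower1} with $k=1$ gives $c(G(L)) \ge 3$, so equality holds.

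For completeness, I might also note how this dovetails with the small-order computations mentioned in the surrounding text: orders $1$ and $2$ have cop number $1$, order $3$ has cop number $2$, order $4$ has cop number $3$ (by direct check), and the corollary then shows the value stabilizes at $3$ for all $n \ge 5$. That remark is not part of the proof but explains why the threshold $n \ge 5$ rather than $n \ge 4$ appears — at $n = 4$ the lower-bound argument of Theorem~\ref{thm:CN_LSG_lower1} breaks down since $4 \not> 4$, and indeed the value there happens to still be $3$ only by the separate direct verification, not by the general lower bound.
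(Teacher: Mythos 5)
Your proposal is correct and matches the paper's intent exactly: the corollary is presented as an immediate consequence of Theorem~\ref{thm:CN_LSG_lower1} (which already incorporates the upper bound of Theorem~\ref{lem:CN_LSG_upper}) in the case $k=1$, with $n\ge 5$ being precisely $n>(1+1)^2$. No further comment is needed.
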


In the case that $k$ is close to $n$, a lower bound is provided in our next theorem, although we do not know if it is tight.

\begin{theorem}\label{lem:CN_LSG_lower2}
Suppose that $n \leq (k+1)^2$. If $\mathcal{L}$ is a set of $k${\rm-MOLS}$(n)$, then
\[
c(G(\mathcal{L})) \geq \left\lceil \frac{n}{k+1}\right\rceil.
\]
\end{theorem}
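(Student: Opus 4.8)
The plan is to run a robber-escape argument analogous to the one in Theorem~\ref{thm:CN_LSG_lower1}, but now with only $c := \lceil n/(k+1)\rceil - 1$ cops at play, showing the robber survives forever. First I would set up the invariant: at the start of each robber turn, the robber sits on a vertex $v = (r,c,s_1,\ldots,s_k)$ such that, among the $k+2$ lines through $v$, at least one line $\ell$ contains no cop. This is immediate from the latin/orthogonal property exactly as before: each cop lies on at most one line incident with $v$, so the $c$ cops occupy at most $c < k+2$ of those lines, leaving a free line $\ell$. The robber will move along $\ell$.

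Next I would bound how many vertices of $\ell$ are blocked. Say $\ell$ is a row-line (the other cases are symmetric). For any cop $C_j$ not on $\ell$, the vertex $C_j$ shares at most one index-type with each vertex of $\ell$ in a way that makes it adjacent; precisely, as computed in Theorem~\ref{thm:CN_LSG_lower1}, each row-line carries $k+1$ entries sharing a given one of the indices $c, s_1,\ldots, s_k$ — wait, more carefully: a fixed cop $C_j$ is adjacent to exactly those vertices of $\ell$ that share its row, column, or one of its $k$ symbols; since $\ell$ is a row-line different from $C_j$'s row, $C_j$ contributes at most $k+1$ blocked vertices on $\ell$ (one per column- or symbol-coordinate match). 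Hence the $c$ cops block at most $c(k+1) = (\lceil n/(k+1)\rceil - 1)(k+1) \le n - 1 < n = |\ell|$ vertices, so some vertex $w \in \ell$ is unblocked, and the robber moves to $w$. Since $w$ is not adjacent to any cop, no cop can capture the robber on the cops' following move, and the invariant is restored. Iterating, the robber evades $c$ cops forever, giving $c(G(\mathcal{L})) \ge c+1 = \lceil n/(k+1)\rceil$.

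The one subtlety — and the step I expect to require the most care — is the very first robber move and the "lookahead" structure of the game: the cops move first in Cops and Robbers, so I must check the invariant can be established after the cops' opening placement and before the robber commits. Since the robber chooses its starting vertex after seeing the cops, it can pick any vertex not occupied by or adjacent to a cop as long as such a vertex exists; with $c(k+1) + c \le n - 1 + c$ possibly exceeding $n^2$ only when $c$ is enormous (which it is not here, since $c \le n/(k+1) \le n$), there is ample room, and in fact the robber can simply pick a vertex so that additionally one incident line is cop-free, which is possible by the same counting. After that, the alternation "cops move, then robber moves" fits the invariant exactly: the robber's escape vertex $w$ is non-adjacent to every cop's \emph{current} position, and a cop needs to be adjacent to $w$ to threaten it next turn, but a cop can move only one step, so it can only reach a vertex adjacent to its current position — and we must verify a cop cannot jump onto $\ell$ ahead of the robber in a way that re-blocks all of $\ell$. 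This is handled because the count "$\le c(k+1)$ blocked vertices" is recomputed fresh each round against the cops' updated positions, so the robber re-selects a free line and a free target every round; there is no accumulation. I would present the argument as a single maintained invariant with the per-round counting $c(k+1) \le n-1$ as the crux inequality.
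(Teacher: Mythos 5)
Your proposal is correct and follows essentially the same route as the paper's proof: each cop lies on at most one of the $k+2$ lines through the robber, so with $\left\lceil \frac{n}{k+1}\right\rceil - 1 \le k$ cops some line $\ell$ is cop-free, and since each cop is adjacent to at most $k+1$ entries of $\ell$, at most $\left(\left\lceil \frac{n}{k+1}\right\rceil - 1\right)(k+1) < n$ entries are blocked, letting the robber escape indefinitely. Your extra care about the opening placement and the round-by-round invariant is a harmless elaboration of what the paper leaves implicit.
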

\begin{proof}
Suppose that $ \lceil \frac{n}{k+1}\rceil -1 \leq  k$ cops are at play, where this inequality follows since $n \leq (k+1)^2$. Independently of how the cops play, on each turn of the robber, they are on a line that does not contain a cop, since each cop can be on at most one line containing the robber, and there are $k+2$ lines incident with the robber.
The robber will move along this line $\ell.$ Each cop is incident to exactly $k+1$ entries on $\ell,$ and so there are $(\lceil \frac{n}{k+1}\rceil -1)(k+1) <n$ entries on $\ell$ adjacent to cops. There is at least one entry on $\ell$ that is not adjacent to a cop, and the robber moves to such an entry.
The robber may now employ this strategy in subsequent rounds to avoid capture.
\end{proof}

When $k=n-1$ or $k=n-2$, which are the largest possible values of $k$, the lower bound in Theorem~\ref{lem:CN_LSG_lower2} is tight, showing that the lower bound cannot be improved. We derive these facts in the following lemmas.

\begin{lemma}
 If $\mathcal{L}$ is a set of $(n-1)${\rm-MOLS}$(n)$, then $c(G(\mathcal{L})) =1.$
\end{lemma}
\begin{proof} The graph $G(\mathcal{L})$ is the complete graph, which requires exactly one cop to capture the robber.
\end{proof}

\begin{lemma}
 If $\mathcal{L}$ is a set of $(n-2)${\rm-MOLS}$(n)$, then $c(G(\mathcal{L})) =2.$
\end{lemma}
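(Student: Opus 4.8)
The plan is to combine an immediate lower bound with an upper bound obtained by identifying $G(\mathcal{L})$ explicitly. Assume $n \ge 3$ (the case $n = 2$ gives $G(\mathcal{L}) \cong C_4$ and is immediate), so that $\mathcal{L}$ consists of $k = n-2 \ge 1$ MOLS and $n \le (n-1)^2 = (k+1)^2$. Theorem~\ref{lem:CN_LSG_lower2} then yields $c(G(\mathcal{L})) \ge \left\lceil \frac{n}{n-1}\right\rceil = 2$, so, since $c(G) \le \gamma(G)$, it remains only to produce a dominating set of $G(\mathcal{L})$ of size $2$.

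The key claim is that $G(\mathcal{L})$ is the complete $n$-partite graph with all parts of size $n$. Call distinct entries \emph{parallel} when they share no index, and write $u \approx v$ when $u$ and $v$ are equal or parallel. (Conceptually, a set of $(n-2)$-MOLS$(n)$ is a net of order $n$ and deficiency one, which extends uniquely to an affine plane of order $n$, and the classes of $\approx$ are then the lines of the missing parallel class; one could invoke this completion result from \cite{col}, but the needed fact is short to prove directly.) First I would prove a counting lemma: for any entry $b = (r_b, c_b, s_1^b, \ldots, s_k^b)$ and any row $\rho \neq r_b$, exactly one entry of row $\rho$ is parallel to $b$. The entries of row $\rho$ sharing an index with $b$ are the entry in column $c_b$ together with, for each $i$, the entry whose $i$th symbol is $s_i^b$. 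These $k+1 = n-1$ entries are pairwise distinct: the column-$c_b$ entry cannot coincide with a symbol entry, else symbol $s_i^b$ would repeat in column $c_b$, and two distinct symbol entries cannot coincide, else the pair $(s_i^b, s_j^b)$ would repeat, contradicting orthogonality. Hence $n-1$ of the $n$ entries of row $\rho$ meet $b$ in an index, leaving exactly one that does not. Since columns and $i$th-symbol lines are also parallel classes of the net, the analogous statement holds for each of them.

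Next I would deduce that $\approx$ is an equivalence relation, the only nontrivial point being transitivity. Suppose $a \approx b$, $b \approx c$, and $a \ne c$. If $a$ and $c$ lay in a common row $\rho$, then $\rho \ne r_b$ because $a$ is parallel to $b$, so $a$ and $c$ would be two distinct entries of row $\rho$ parallel to $b$, contradicting the lemma; the same reasoning excludes $a$ and $c$ sharing a column or a symbol, so $a \approx c$. Each class of $\approx$ therefore has $1 + (n-1) = n$ elements, there are $n$ classes, and two entries are adjacent in $G(\mathcal{L})$ precisely when they lie in different classes; that is, $G(\mathcal{L})$ is the complete $n$-partite graph $K_{n \times n}$. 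Any two entries in distinct classes now form a dominating set, since every remaining entry lies either in a third class, hence is adjacent to both, or in the class of one of the two, hence is adjacent to the other. Thus $\gamma(G(\mathcal{L})) = 2$, so $c(G(\mathcal{L})) \le 2$, and with the lower bound we conclude $c(G(\mathcal{L})) = 2$.

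I do not anticipate a serious obstacle. The one step needing genuine care is the transitivity of $\approx$ — equivalently, ruling out an entry parallel to two entries that share an index — which is precisely the combinatorial heart of the deficiency-one net completion theorem, but the counting lemma reduces it to a pigeonhole argument. Everything else is the bound $c(G) \le \gamma(G)$ together with the elementary domination number of a complete multipartite graph.
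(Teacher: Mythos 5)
Your proof is correct, and it reaches the upper bound by a more self-contained route than the paper does. The paper also takes the lower bound from Theorem~\ref{lem:CN_LSG_lower2}, but for the upper bound it simply cites the fact that every set of $(n-2)$-MOLS$(n)$ completes uniquely to a set of $(n-1)$-MOLS$(n)$ with final square $L'$, and then gives an explicit two-cop strategy: place both cops in one row in columns $c_1,c_2$, and observe that a cop at $(r,c)$ reaches every vertex outside the $L'$-class of $(r,c)$, while the two cops lie in distinct $L'$-classes. Your argument proves the underlying structural fact directly instead of invoking it: the counting lemma (exactly one entry of each line missing an index of $b$ is parallel to $b$, by the latin and orthogonality properties) plus the transitivity argument is exactly the deficiency-one completion statement, phrased as ``parallel-or-equal is an equivalence relation with classes of size $n$,'' and it identifies $G(\mathcal{L})$ as the complete $n$-partite graph $K_{n\times n}$, after which $c(G)\le\gamma(G)\le 2$ finishes the job. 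The two approaches encode the same idea (the missing parallel class), but yours buys independence from the completion theorem at the cost of a page of verification, while the paper's is shorter by outsourcing that combinatorial core; your explicit handling of the degenerate case $n=2$ (where $k=0$ and Theorem~\ref{lem:CN_LSG_lower2} does not apply) is a small point the paper leaves implicit.
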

\begin{proof} The lower bound is given by Theorem~\ref{lem:CN_LSG_lower2}.  To show that two cops are sufficient to capture the robber, we first note that every set $\mathcal{L}$ of $(n-2)$-MOLS$(n)$ has a unique latin square, say $L'$, that can appended to $\mathcal{L}$ to form a set of $(n-1)$-MOLS$(n)$.
If a cop is in row $r$ and column $c$, then they can move to any vertex in $G(\mathcal{L})$ except those vertices in row $r'$ and column $c'$ such that $L'[r,c] = L'[r',c']$.

Suppose we place two cops on row $r$, one in column $c_1$ and the other in column $c_2$. Therefore, if the robber is on a vertex in row $r'$ and column $c'$, and if $L'[r,c_1] = L'[r',c']$, then $L'[r,c_2] \neq L'[r',c']$, and the second cop can capture the robber.
\end{proof}

The upper bound in Theorem~\ref{lem:CN_LSG_upper} is not tight when $k\in \{n-2,n-1\}$, and the lower bound in Theorem~\ref{lem:CN_LSG_lower2} is tight. It is possible that both could be tight for values $n < (k+1)^2$ with $k \notin \{n-2,n-1\}$, as it is possible that there is one latin square that reaches the lower bound, and another latin square of the same order that reaches the upper bound. We note that in the case of graphs from $2$-MOLS$(n),$ our results show that the cop number is $4$ for $n\ge 10.$ Analogous (but omitted) arguments improve this to show that the cop number of graphs from $2$-MOLS$(n)$ is 4 if $n\ge 7.$

\section{Metric dimension of latin squares}\label{secmd}

We begin with general results on the metric dimension of graphs derived from MOLS.

\begin{theorem}
If $\mathcal{L}$ is a set of $k${\rm-MOLS}$(n)$, then
\[
\beta(G(\mathcal{L})) \leq (k+2)(2n-k-2).
\]
\end{theorem}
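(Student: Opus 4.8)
The plan is to view $G(\mathcal{L})$ as a graph of diameter at most $2$ and to exhibit an explicit resolving set of the claimed size. First one checks the diameter bound: if two cells share no line, then the cell lying in the row of the first and the column of the second is adjacent to both. Consequently a distance probe from a cell $w$ to the robber's cell $v$ only reveals whether $w=v$, $w\sim v$, or $w\not\sim v$, so a set $W$ of cells is a resolving set exactly when every two distinct cells $u,u'$ are \emph{separated} by $W$: either $W$ meets $\{u,u'\}$, or $W$ contains a cell adjacent to precisely one of $u,u'$. The structural fact to use throughout is that any two of the $k+2$ coordinates of a cell (its row, its column, and its $k$ symbols) determine the cell; in particular two distinct cells agree in at most one coordinate.

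For the construction I would pick two values for each of the $k+2$ coordinate types and let $W$ consist of all cells having a chosen value in the corresponding coordinate---equivalently, $W$ is the union of two lines of each of the $k+2$ types. After relabelling we may assume the chosen values are $1$ and $2$ in every coordinate, so that $V(G(\mathcal L))\setminus W$ is the set of cells all of whose coordinates lie in $\{3,\dots,n\}$. To bound $|W|$, average over the $\binom n2^{k+2}$ choices of lines: a fixed cell lies outside $W$ with probability $\bigl(\tfrac{n-2}{n}\bigr)^{k+2}$, so the expected number of cells outside $W$ is $n^2\bigl(\tfrac{n-2}{n}\bigr)^{k+2}=\tfrac{(n-2)^{k+2}}{n^{k}}$. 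Since the arithmetic mean of the $k+2$ numbers consisting of $k$ copies of $n$ together with two copies of $n-k-2$ equals $n-2$, the AM--GM inequality yields $n^{k}(n-k-2)^2\le (n-2)^{k+2}$. Hence some choice of the lines leaves at least $(n-k-2)^2$ cells outside $W$, and therefore $|W|\le n^2-(n-k-2)^2=(k+2)(2n-k-2)$.

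It remains to argue that $W$, for a suitable choice of the lines, is resolving, and this is the step I expect to be the main obstacle. Cells of $W$ are separated from everything automatically, so fix distinct cells $u,u'$ outside $W$; by the structural fact they agree in $0$ or $1$ coordinate. A short inclusion--exclusion computation (using that two cells outside $W$ agreeing in two coordinates are equal) shows that the number of cells of $W$ adjacent to $u$ equals $(k+2)n-\sum_{t}m_t(u)$, where $m_t(u)$ denotes the number of cells outside $W$ sharing $u$'s $t$-th coordinate; moreover $m_t(u)\le n-2$, since the two removed lines of any one other type cut off two cells of $u$'s $t$-th line. On the other hand, any cell of $W$ adjacent to both $u$ and $u'$ shares a coordinate with each, so by the structural fact these cells lie in an explicit family of size at most $(k+2)(k+1)$ (at most one cell for each ordered pair of distinct coordinates), together with, when $u$ and $u'$ share a coordinate, the cells of $W$ on their common line. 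Thus if $N(u)\cap W=N(u')\cap W$ then $(k+2)n-\sum_t m_t(u)$ is forced to be small, and this is impossible once one knows $m_t(u)\le n-k-2$ for the relevant coordinates $t$. A generic choice of the two lines per type has exactly $2(k+1)$ distinct cells on each line lying in removed lines of other types, giving $m_t(u)=n-2(k+1)\le n-k-2$; the hard part is to show that some choice achieves $m_t(u)\le n-k-2$ (or something close) \emph{simultaneously} for all cells $u\notin W$ and for the pairs sharing a coordinate, the enemy being ``heavily collided'' lines on which several removed lines of different types meet in a common cell. I would attack this with a union-bound/second-moment refinement of the averaging argument above in the regime $n$ large relative to $k$, noting that for $n$ close to $k+2$ the quantity $(n-k-2)^2$ is tiny (and for $n=k+1$ the graph is complete), so the remaining cases are either trivial or easily treated by hand.
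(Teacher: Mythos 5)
There is a genuine gap: you never prove that your set $W$ is resolving. You candidly flag this as ``the main obstacle,'' and what you offer in its place is a sketch (an inclusion--exclusion count, the bound $m_t(u)\le n-k-2$, and a hoped-for union-bound/second-moment refinement) that is not carried out, is explicitly restricted to the regime where $n$ is large relative to $k$, and leaves the remaining cases to unspecified hand-checking. There is also an internal tension you do not resolve: your size bound $|W|\le (k+2)(2n-k-2)$ comes from an averaging argument showing that \emph{some} choice of the $2(k+2)$ lines leaves at least $(n-k-2)^2$ cells uncovered, while your separation argument appeals to properties of a \emph{generic} choice (e.g.\ exactly $2(k+1)$ collisions per line); you would need a single choice of lines satisfying both the size bound and the resolving property simultaneously, and no such choice is exhibited. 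As it stands the theorem is not proved.

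For comparison, the paper's construction avoids all of this: take the union of any $k+2$ rows and $k+2$ columns (no symbol lines), which has exactly $(k+2)(2n-k-2)$ cells, and use the single observation that a vertex not on a line $\ell$ has at most $k+1$ neighbours on $\ell$. The robber's row meets each chosen column and its column meets each chosen row, so $k+2$ cops on a common row and $k+2$ cops on a common column probe distance $1$; by the observation this can only happen for the robber's own row and column, which pins down the cell in one round. If you want to salvage your ``two lines of each type'' construction, the missing separation lemma would have to be proved outright, and at that point the paper's choice of lines is both simpler and gives the stated bound with equality rather than via an averaging existence argument.
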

\begin{proof}
Choose any $k+2$ rows and $k+2$ columns, and place $(k+2)(2n-k-2)$-many cops to fill these rows and columns. As this set has $(k+2)(2n-k-2)$ entries, each cop occupies a distinct vertex.

If the robber is on some entry, then the $k+2$ cops on that row and $k+2$ cops on that column will probe a distance of $1$. The only time when $k+2$ cops on the same row (respectively, column) probe a distance of $1$ is when the robber is on that row (respectively, column), since each vertex not on a line $\ell$ has at most $k+1$ neighbors on $\ell$.
As such, the cops know the row and column that the robber is on, and so know the exact location of the robber in the first round.
\end{proof}

Applying this result in the case for latin squares of order $n$ (with $k=1$) yields an upper bound of $6n-9$, which can be substantially improved.

\begin{theorem} \label{thm:LS_MD_upper_nonInter}
If $L$ is a latin square of order $n$ that contains a set of four entries of the form $\{(r_1,c_1,s_1), (r_1,c_2,s_2), (r_2,c_1,s_2), (r_2,c_2,s_3) \}$, where $s_1,s_2,s_3$ are each distinct and $n \geq 5$, then
\[
\beta(G(L)) \leq 2n-3.
\]
\end{theorem}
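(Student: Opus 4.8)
The plan is to exhibit an explicit resolving set of size $2n-3$ consisting of two rows of $L$ with three cells deleted. First I would record that $G(L)$ has diameter $2$: any two cells are joined through a common row, and for $n\ge 3$ the graph is not complete, with any two non-adjacent cells $(r,c,s)$, $(r',c',s')$ having the common neighbour $(r,c',L[r,c'])$. Consequently, for distinct vertices $u,w$ we have $d(u,w)=1$ exactly when the cells share a row, a column, or a symbol, and $d(u,w)=2$ otherwise. Hence a set $W$ of cops wins in round $0$ if and only if $v\mapsto N(v)\cap W$, together with the information of whether $v\in W$, is injective on $V(G(L))$; since every $v\in W$ is resolved by the cop that probes $0$, it suffices to show that $v\mapsto N(v)\cap W$ is injective on the cells not occupied by $W$.

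Let $r_1,r_2,c_1,c_2,s_1,s_2,s_3$ be as in the hypothesis, and for a row $r$ write $R_r$ for the set of $n$ cells in row $r$. A natural choice is
\[
W \;=\; \big(R_{r_1}\cup R_{r_2}\big)\ \setminus\ \big\{(r_1,c_1,s_1),\,(r_1,c_2,s_2),\,(r_2,c_1,s_2)\big\},
\]
so that $|W| = 2n-3$. The point of this choice is that the two deleted cells carrying $s_2$ are precisely the two occurrences of $s_2$ in rows $r_1$ and $r_2$, so no cop of $W$ lies on a cell with symbol $s_2$; likewise both cells of column $c_1$ in rows $r_1,r_2$ are deleted, so no cop of $W$ lies in column $c_1$. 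I would also note the consequences of $s_1,s_2,s_3$ being distinct: the symbol-$s_1$ cop of row $r_2$, the symbol-$s_3$ cop of row $r_1$, and the cop $(r_2,c_2,s_3)$ all survive in $W$, since any deleted cell in a given row has a prescribed symbol ($s_1$ or $s_2$ in row $r_1$, $s_2$ in row $r_2$). Call column $c_1$ and symbol $s_2$ the two \emph{holes} of $W$; since the only cell lying in both holes is $(r_2,c_1,s_2)$, which is itself one of the deleted cops (handled below), no cell lies in both holes except a deleted cop.

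For the injectivity argument I would first treat the generic case $v=(r,c,s)$ with $r\notin\{r_1,r_2\}$ and $c\ne c_1$: then the cop(s) of $W$ in column $c$ — one in each of rows $r_1,r_2$, or just $(r_2,c_2,s_3)$ if $c=c_2$ — together with the surviving symbol-$s$ cops of rows $r_1,r_2$, are visible, and reading off $c$ as the unique column common to the visible row-$r_1$ and row-$r_2$ cops, then $s$ from the remaining visible row-$r_1$ cop (or, when both symbol-$s$ cops happen to be deleted, which forces $s=s_2$ and makes $s$ the unique symbol with no surviving cop, again recovering $s$), and finally the unique $r$ with $L[r,c]=s$, pins down $v$. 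Then I would handle the boundary families one at a time: the three deleted cells (each lies in row $r_1$ or $r_2$ and hence is adjacent to $n-2$ or $n-1$ surviving cops of that row, so it is distinguished from every non-row cell, and the three are separated from one another by which extra cop is attached — a symbol-$s_1$ cop, the cop $(r_2,c_2,s_3)$, or none), the cells of column $c_1$ outside rows $r_1,r_2$, and the cells with symbol $s_2$ outside rows $r_1,r_2$; in each family two of the three coordinates are still visible and the latin property forces the third.

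The main obstacle is the bookkeeping in this last step: one must verify that no boundary family collides with the generic cells or with another boundary family, and it is exactly here that the distinctness of $s_1,s_2,s_3$ is used (to keep the relevant symbol- and column-cops alive) and that $n\ge 5$ is used (so that $n-3\ge 2$ surviving cops remain in each of rows $r_1,r_2$, making membership in row $r_i$ detectable, and so that $G(L)$ is neither complete nor of diameter exceeding $2$). I would organize the whole case check around the two holes, using the observations that a non-occupied cell lies in at most one hole and that lying in a hole still leaves two resolving coordinates.
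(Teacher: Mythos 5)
Your construction is, up to transposing the roles of rows and columns (and exchanging the corner $(r_1,c_1,s_1)$ with $(r_2,c_2,s_3)$), exactly the paper's resolving set — two parallel lines with the two $s_2$-cells and one corner removed, the surviving opposite corner providing the disambiguation — so this is essentially the same approach, recast via diameter~$2$ and injectivity of $v\mapsto N(v)\cap W$. The deferred bookkeeping does go through: the only collision needing $s_1\neq s_3$ is between the deleted cells $(r_1,c_1,s_1)$ and $(r_1,c_2,s_2)$, and $n\ge 5$ is what guarantees that a robber in row $r_1$ or $r_2$ sees at least three cops of that row and hence cannot be confused with any cell outside those rows.
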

\begin{proof}
We place $2n-3$ cops on all the entries of the columns $c_1$ and $c_2$, except $\{(r_1,c_2,s_2), (r_2,c_1,s_2), (r_2,c_2,s_3) \}$.
Hence, exactly one symbol, $s_2$, is not represented among the cops.
If the $n-2 \geq 3$ cops in column $c_2$ probe a distance of $1$ to the robber, then the robber could be on either the entry $ (r_1,c_2,s_2)$ or $(r_2,c_2,s_3)$. In the first case, the cop on entry $(r_1,c_1,s_1)$ probes a distance of $1$ to the robber, as they share the same row, and in the second case this cop probes a distance of $2$, and so the two cop-free vertices of column $c_2$ are distinguishable. If the $n-1 \geq 4$ cops in column $c_1$ probe a distance of $1$ to the robber, then they must be on an entry $ (r_2,c_1,s_2)$. Therefore, the cops can capture the robber if the robber is on columns $c_1$ or $c_2$, so we suppose that this is not the case.

If two cops that share a row both probe a distance of $1$, then the robber must be on that row. Further, if at least one other cop probes a distance of $1,$ then that cop shares its symbol with the robber, and so the cops know the row and symbol of the robber. The robber is then captured using the latin property.
If no other cop probes a distance of one, then the entry of the robber contains symbol $s_2$, and so the cops may capture the robber.

An analogous argument holds in the case that two cops who share a symbol both probe a distance of $1$. Therefore, we are left with the cases where the robber is on row $r_1$ or $r_2$, has symbol $s_2$ or $s_3$, and is not in columns $c_1$ or $c_2$.  We observe that there are no such entries that contain $s_2$, as the only entries in row $r_1$ or $r_2$ with symbol $s_2$ occur in column $c_1$ or $c_2$. There is only one entry containing symbol $s_3$ that satisfy these conditions, since each row contains one entry with symbol $s_3$ and entry $(r_2,c_2,s_3)$ does not satisfy these condition. Therefore, the robber can be captured as they are located on the entry of row $r_1$ and symbol $s_3$, which is uniquely determined by the latin property.
\end{proof}

There are some latin squares that are not covered by Theorem~\ref{thm:LS_MD_upper_nonInter}, such as the Cayley table of addition for $\mathbb{Z}_2^k$ for $k \in \mathbb{N}.$ A slight modification is applicable to all latin squares.

\begin{theorem}\label{thm:LS_MD_upper}
If $L$ is a latin square of order $n \geq 4$, then
\[
\beta(G(L)) \leq 2n-2.
\]
\end{theorem}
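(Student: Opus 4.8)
The plan is to adapt the proof of Theorem~\ref{thm:LS_MD_upper_nonInter}, spending one extra cop in exchange for dropping the structural hypothesis on $L$. Fix any symbol $s_1$ and let $(r_1,c_1,s_1)$ and $(r_2,c_2,s_1)$ be two distinct entries of $L$ carrying it; since each symbol occurs once per row and once per column, automatically $r_1\ne r_2$ and $c_1\ne c_2$. Place $2n-2$ cops on all entries of columns $c_1$ and $c_2$ except these two entries. Because the only entries of these two columns with symbol $s_1$ are the two omitted ones, $s_1$ is the unique symbol not appearing on a cop, while every other symbol appears on a cop in each of the two columns. The goal is to show the cops determine the robber's vertex from a single round of probing.

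First dispose of the robber lying on column $c_1$ or $c_2$: if it occupies a cop's vertex some probe equals $0$; otherwise it is on one of the two omitted entries, and then all $n-1\ge 3$ cops of that column probe distance $1$, while a vertex off a line has at most two neighbours on that line, so the cops conclude the robber lies on that column, hence on its unique cop-free entry. So assume the robber is on an entry $(r,c,s)$ with $c\notin\{c_1,c_2\}$. For such an entry, a cop in column $c_i$ is at distance $1$ exactly when it lies in row $r$ or carries symbol $s$; the only candidates are the row-$r$ cop of column $c_i$ (present unless $r=r_i$) and the symbol-$s$ cop of column $c_i$ (present unless $s=s_1$), and these two are distinct because $L[r,c_i]\ne s$.

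I then split into two cases whose union is everything once columns $c_1,c_2$ are removed, since ``$r\in\{r_1,r_2\}$ and $s=s_1$'' would force the robber onto $c_1$ or $c_2$. If $r\notin\{r_1,r_2\}$, the row-$r$ cops $(r,c_1,\cdot)$ and $(r,c_2,\cdot)$ are both present and both probe $1$; two distance-$1$ cops sharing a row force the robber onto that row, since a vertex off $c_1,c_2$ adjacent to both must share their common row (it cannot share both of their distinct symbols $L[r,c_1]\ne L[r,c_2]$), so the cops learn $r$. Then either some further distance-$1$ cop is present, which must carry the robber's symbol and so reveals $s$; or there is none, which forces $s=s_1$. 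Either way the robber's row and symbol are known and the latin property determines the cell. Symmetrically, if $s\ne s_1$, the symbol-$s$ cops of the two columns are both present, lie in distinct rows, and both probe $1$, forcing the robber onto symbol $s$; at least one row-$r$ cop is then present (as $r=r_1=r_2$ is impossible), carries a symbol other than $s$, and hence must share the robber's row and reveal it, so the cell is again determined.

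The crux is making the ``two distance-$1$ cops sharing a row (or a symbol) pin the robber to that line'' step watertight and verifying that, after deleting columns $c_1,c_2$, these two cases really are exhaustive; both facts rest only on $r_1\ne r_2$ and on the latin property forbidding a repeated symbol in a row or column. Everything else is routine bookkeeping of which of the two natural distance-$1$ neighbours in each column is actually occupied by a cop.
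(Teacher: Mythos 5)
Your construction is correct, and it takes a genuinely more uniform route than the paper. The paper proves the $2n-2$ bound by a dichotomy: if $L$ contains a $2\times 2$ configuration $\{(r_1,c_1,s_1),(r_1,c_2,s_2),(r_2,c_1,s_2),(r_2,c_2,s_3)\}$ with $s_1,s_2,s_3$ distinct, it invokes Theorem~\ref{thm:LS_MD_upper_nonInter} (which gets $2n-3$ with that hypothesis); otherwise it patches that proof by adding one extra cop on $(r_1,c_2,s_2)$ in the degenerate case $s_1=s_3$, so every symbol ends up represented and the two cop-free cells lie in a common row with distinct symbols. You instead fix a single symbol $s_1$, occupy two columns minus its two cells (which lie in distinct rows and columns), and run one clean probe analysis: the on-column cases are detected by a zero probe or by $n-1\ge 3$ distance-$1$ cops in one column (using $n\ge 4$ and the fact that an off-line vertex has at most two neighbours on a line), and off the columns your two cases (``row not in $\{r_1,r_2\}$'' versus ``symbol not $s_1$'') are exhaustive and each forces the robber's row and symbol, hence its cell by the latin property; I checked the forcing arguments and the pattern-disambiguation and they are sound. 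What you lose relative to the paper is the sharper $2n-3$ bound for squares satisfying the hypothesis of Theorem~\ref{thm:LS_MD_upper_nonInter}; what you gain is a self-contained proof of the stated bound that needs no structural case distinction on $L$ and no appeal to the earlier theorem.
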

\begin{proof}
By Theorem~\ref{thm:LS_MD_upper_nonInter}, all cases follow except the case where it is impossible for a subset of entries $\{(r_1,c_1,s_1), (r_1,c_2,s_2), (r_2,c_1,s_2), (r_2,c_2,s_3) \}$ to exist with $s_1,s_2,s_3$ each being unique.

Suppose that $s_1 = s_3$, and we now play as before, except that we also include an additional cop on entry $(r_1,c_2,s_2)$.
The proof of Theorem~\ref{thm:LS_MD_upper_nonInter} is straightforwardly modified to show that the robber's row and symbol are determined by the cops, and so the robber's exact location is known.
\end{proof}

We present a lower bound for graphs arising from mutually orthogonal latin squares.

\begin{theorem} \label{thm:LSG_lower}
If $\mathcal{L}$ is a set of $k${\rm-MOLS}$(n)$, then
\[
 \beta(G(\mathcal{L})) \geq \frac{2n^2-2}{(k+2)(n-1)+4} .
\]
\end{theorem}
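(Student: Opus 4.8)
The plan is to run a double-counting argument built on the observation that $G := G(\mathcal{L})$ has diameter at most $2$: any two cells $(r,c,s_1,\dots,s_k)$ and $(r',c',s_1',\dots,s_k')$ either lie on a common line or have the cell in row $r$ and column $c'$ as a common neighbour. Consequently, for a resolving set $W$ and a vertex $v\in V(G)\setminus W$, every cop probe $d(v,w)$ takes a value in $\{1,2\}$, equal to $1$ precisely when $w\in N(v)$. So the distance vector of $v$ with respect to $W$ is completely determined by the set $N(v)\cap W$.

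First I would fix an optimal resolving set $W$, write $\beta=|W|=\beta(G(\mathcal{L}))$, and note that since $W$ resolves $G$, the sets $N(v)\cap W$ are pairwise distinct as $v$ ranges over $V(G)\setminus W$ (vertices in $W$ are automatically distinguished, since they register distance $0$ to themselves). From the distinctness I would extract two structural facts: at most one vertex of $V(G)\setminus W$ is adjacent to no cop (such a vertex has distance vector $(2,2,\dots,2)$), and for each fixed cop $w$ at most one vertex of $V(G)\setminus W$ satisfies $N(v)\cap W=\{w\}$ (these all share the same distance vector, with a single $1$ in coordinate $w$). Hence at most $\beta$ vertices of $V(G)\setminus W$ are adjacent to exactly one cop.

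Next I would partition $V(G)\setminus W$ according to whether a vertex is adjacent to $0$, exactly $1$, or at least $2$ cops, with respective counts $a_0\le 1$, $a_1\le\beta$, and $a_{\ge 2}$. Counting edges between $W$ and $V(G)\setminus W$ and using that $G(\mathcal{L})$ is $(k+2)(n-1)$-regular yields $2a_{\ge 2}\le \sum_{v\notin W}|N(v)\cap W| = e(W,V(G)\setminus W)\le \beta(k+2)(n-1)$, so $a_{\ge 2}\le \tfrac12\beta(k+2)(n-1)$. Combining, $n^2-\beta = a_0+a_1+a_{\ge 2}\le 1+\beta+\tfrac12\beta(k+2)(n-1)$, which rearranges to $\beta\bigl((k+2)(n-1)+4\bigr)\ge 2n^2-2$ and gives the claimed inequality.

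I do not anticipate a genuine obstacle: once the diameter-$2$ remark and the ``at most one private neighbour per cop'' remark are in hand, the rest is a short estimate. The only points requiring a little care are the degenerate case in which $G(\mathcal{L})$ is the complete graph (then $\beta=n^2-1$ and the bound holds trivially, so one may assume the diameter is exactly $2$), and bounding $e(W,V(G)\setminus W)$ by regularity rather than by a cruder over-count, since the constant $4$ in the denominator is exactly what this clean bound produces.
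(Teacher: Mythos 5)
Your argument is correct and is essentially the paper's own proof: the same three-way count (at most one all-distance-$2$ vertex, at most one private neighbour per cop, and an edge-count using the $(k+2)(n-1)$-regularity for the vertices seeing at least two cops) leading to the same inequality $2(n^2-2\beta-1)\le \beta(k+2)(n-1)$. Your explicit diameter-$2$ remark and the complete-graph caveat are just tidier packaging of what the paper leaves implicit.
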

\begin{proof}
Suppose we have $c$-many cops, where $c$ is a positive integer that is to be determined. To successfully capture the robber in one move, only one vertex can have distance two to all cops.
At most $c$ vertices can have distance $1$ to one cop and distance $2$ to all other cops. At most $c$ vertices can have distance $0$ to a cop.
We are therefore left with $n^2 -2c-1$ vertices that may have two or more cops at distance $1$ and the remaining cops at distance $2$.
Each of the $c$ cops is adjacent to at most $(k+2)(n-1)$ of these vertices, which is the regularity of the graph.

Consider the induced subgraph with one part being the vertices that contain cops and the other part being those vertices of distance $1$ to two or more cops. We then delete all edges between vertices in the same part, forming a bipartite graph.

The first part has $c$ vertices, each of degree at most $(k+2)(n-1)$, and the second part has $n^2 -2c-1$ vertices, each of degree at least $2$. Thus, counting the edges coming from vertices in the first part yields at most $c(k+2)(n-1)$, while counting the edges coming from the second part yields at least $2(n^2 -2c-1)$ edges. As both of these are counts of the number of edges in the bipartite graph, it follows that  $2(n^2 -2c-1) \leq c(k+2)(n-1)$, yielding $$\frac{2n^2-2}{(k+2)(n-1)+4} \leq c,$$ which finishes the proof.
\end{proof}

For latin squares, this yields a lower bound of $\frac{2n^2-2}{3n+1} = \frac{2n}{3}-O(1)$ for their metric dimension. We improve this bound as follows.

\begin{theorem} \label{mdmain}
If ${L}$ is a latin square of order $n,$ then
\[ \beta(G({L})) \geq n-\sqrt{\frac{n}{3}+\frac{37}{36}}+ \frac{1}{6}.\]
\end{theorem}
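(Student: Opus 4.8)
The plan is to set up a counting argument for a metric-dimension resolving set $W$ of $G(L)$ that is sharper than the bipartite double-count used in Theorem~\ref{thm:LSG_lower}, by exploiting the fact that the latin square graph is strongly regular with only three distinct distances ($0$, $1$, and $2$). Write $c=|W|$. Every vertex $v\notin W$ must be ``resolved'', so the distance vector $(d(v,w))_{w\in W}$ is determined by the multiset of $w\in W$ with $d(v,w)=1$ (the rest being at distance $2$). Thus the map $v\mapsto N(v)\cap W$ must be injective on $V\setminus W$. The key structural input is that each $v$ has exactly $3(n-1)$ neighbours, partitioned into a row-line, a column-line, and a symbol-line of $n-1$ vertices each, and that two vertices on a common line have exactly $n-2+\mu$ common neighbours for the appropriate $\mu$; more usefully, \emph{any two distinct vertices lie on at most one common line}. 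I would use this last fact to bound, for each line $\ell$, how many cops can sit on $\ell$ without creating ambiguity, and then aggregate over the $3n$ lines.

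The main steps, in order. First, I would argue that for the resolving set to work, for each vertex $v\notin W$ the ``profile'' $(a,b,c')$ recording how many cops of $W$ lie on $v$'s row, column, and symbol lines respectively must, together with the identities of those cops, determine $v$; in particular no two cop-free vertices sharing a line may have the same neighbourhood within $W$. Second — the heart of the argument — I would count pairs $(v,\ell)$ where $v\notin W$ lies on a line $\ell$ that contains \emph{at most one} cop. If a line $\ell$ contains at most one cop of $W$, then all of the roughly $n-1$ cop-free vertices on $\ell$ ``see'' $\ell$ only through that single cop (or not at all), so their mutual resolution must come entirely from their other two lines; pushing this forces many other lines to be cop-heavy. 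Quantitatively, I expect to show that if $c$ is too small then some line $\ell$ has at most one cop, and on $\ell$ there are two cop-free vertices $v_1,v_2$ whose other four lines (two each) also carry few cops, so $N(v_1)\cap W=N(v_2)\cap W$, a contradiction. Third, I would convert the resulting inequality — something of the shape $c$ must exceed $n - (\text{something of order }\sqrt{n})$ — into the stated bound. The quadratic that emerges, on completing the square, should be exactly $c^2 - (\text{linear in }n)\,c + (\text{const}) \ge 0$ with roots at $n \pm \sqrt{n/3 + 37/36} + 1/6$, and taking the larger root (since $c$ is close to $n$) yields $\beta(G(L)) \ge n - \sqrt{n/3+37/36} + 1/6$.

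Concretely, I anticipate the decisive inequality comes from the following: let $t$ be the number of lines containing at most one cop of $W$. Each such ``light'' line forces its $\ge n-1$ cop-free vertices to be resolved via their two other lines, and a short double count of incidences between cop-free vertices on light lines and the cops on their transverse lines, combined with $|W|=c$ and the $3n$–line, $n^2$–vertex structure, gives a bound like $3(n-c)$ lines can be light only if $\binom{n-c+1}{2}$ or so is controlled by $c$; balancing these yields $(n-c)(n-c-1)\lesssim \tfrac{1}{3}(\text{linear in }c,n)$, i.e. a quadratic in $n-c$. I would introduce $x = n - c$ and reduce everything to $3x^2 - x - (n + \tfrac{1}{12}) \le 0$ (or the analogous inequality that produces the constants $1/3$, $37/36$, $1/6$), then solve for $x$ via the quadratic formula to get $x \le \sqrt{n/3 + 37/36} - 1/6$, hence $c \ge n - \sqrt{n/3+37/36}+1/6$.

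The main obstacle I foresee is the case analysis in the second step: controlling exactly how vertices on a light line are distinguished when one or more of their transverse lines are \emph{also} light, and in particular handling the small number of cop-free vertices that might be resolved by being at distance $\infty$ or by an idiosyncratic profile — one must rule out that a clever placement of cops resolves a light line ``for free''. I would handle this by a worst-case extremal argument: assume for contradiction $c < n - \sqrt{n/3+37/36}+1/6$, deduce $x = n-c$ is large enough that the number of light lines exceeds what the $c$ cops can ``cover'', produce a light line $\ell$ two of whose cop-free vertices also have both transverse lines light, and observe these two vertices then have identical distance vectors (all $1$'s to the same $\le$ five cops, $\infty$ or $2$ elsewhere), contradicting that $W$ resolves $G(L)$. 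The remaining bookkeeping is routine once the light-line count is pinned down.
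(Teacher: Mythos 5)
There is a genuine gap: the decisive second step of your plan is only anticipated, never carried out, and the contradiction configuration you propose does not actually produce two unresolved vertices. If $v_1,v_2$ are cop-free vertices on a common light line and their transverse lines are also light, their $W$-neighbourhoods need not coincide: ``light'' permits one cop, and a cop on $v_1$'s column (or symbol) line is in general at distance $2$ from $v_2$, which resolves the pair. To get genuinely identical distance vectors along your route you would need all four transverse lines to be cop-free, i.e.\ two cells in cop-free rows and columns whose symbols are also cop-free; but with $c\approx n-\Theta(\sqrt n)$ cops at most $n-c$ symbols are cop-free and (as one checks immediately) at most one cell of the cop-free row--column subsquare can carry a cop-free symbol, so the configuration you want to force essentially never has to occur, and no choice of the loose counting you sketch will force it. Relatedly, your guessed quadratic $3x^2-x-(n+\tfrac{1}{12})\le 0$ does not reproduce the theorem's constants (it gives $c\ge n-\sqrt{n/3+1/18}-\tfrac16$, with the wrong sign on $\tfrac16$), a symptom that the inequality was reverse-engineered rather than derived.

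The ambiguity that actually drives the bound is different from the one you aim at. Writing $s=n-c$, the cops leave at least $s$ cop-free rows $\overline{R}$ and $s$ cop-free columns $\overline{C}$; two cells of the subsquare $\overline{R}\times\overline{C}$ that share a symbol see \emph{exactly the same} cops, namely every cop on that common symbol line -- no matter how many there are -- so they cannot be resolved. Hence the $s^2$ cells of the subsquare carry distinct symbols, at most one of them cop-free, which forces $s^2-1$ cops meeting those symbols; two further pairwise disjoint batches of $s^2-1$ forced cops arise from the cells of $\overline{R}\times C$ and of $R\times\overline{C}$ that carry cop-free symbols, each needing its own distinguishing cop in its column (respectively row), distinct from the previous batches. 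This yields $n-s\ge 3(s^2-1)$, whose solution is exactly $s\le -\tfrac16+\sqrt{n/3+37/36}$, i.e.\ the stated bound. Your proposal contains no mechanism producing these three disjoint families (the source of the factor $3$), so as written it does not establish the theorem.
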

\begin{proof} Let $s$ be a positive integer that will be determined later in the proof.  If we play with $n-s$ cops, then independently of how they are employed, there are at least $s$ rows $\overline{R}$ and $s$ columns $\overline{C}$ whose entries do not contain a cop, and $s$ symbols $\overline{S}$ that do not occur in the entries occupied by cops.
Define $R$, $C$, and $S$ as the set of rows, columns, and symbols, respectively, that do contain cops.
Since the cops uniquely determine the distances of the cells in the subsquare $\overline{R} \times \overline{C}$, there cannot be two entries that contain the same symbol in a cell of $\overline{R} \times \overline{C}$, and also $\overline{R}\times \overline{C}$ may contain at most one symbol in $\overline{S}$.
Therefore, there must be $s^2-1$ cops on vertices whose symbols are in $\overline{R}\times \overline{C}$, which we label as $A_1$.

Each symbol in $\overline{S}$ must occur $s$ times in the rows of $\overline{R}$, and since at most one element of $\overline{R}\times \overline{C}$ may contain a symbol in $\overline{S}$, there are $s^2-1$ entries in $\overline{R}\times C$ containing symbols in $\overline{S}$, and there must be a cop in each of these columns.
Further, such a cop cannot determine the location of the robber if that cop is in $A_1.$ To see this, if we suppose that there was a cop in $A_1$ that was the only cop
with distance $1$ to some entry in $\overline{R}\times \overline{C}$ and the only cop to be distance $1$ to some entry in $\overline{R}\times C$, then these two cells cannot be distinguished by the cops. The robber cannot be caught if they move to one of these two entries in the initial round.

Hence, there must be $s^2-1$ cops, say $A_2,$ not in $A_1$. Each symbol in $\overline{S}$ must occur $s$-many times in the columns of $\overline{C}$, and since at most one element of $\overline{R}\times \overline{C}$ may contain a symbol in $\overline{S}$, there are $s^2-1$ entries in $R\times \overline{C}$ containing symbols in $\overline{S}$, and there must be a cop in each of these columns. Further, such a cop cannot determine the location of the robber if said cop is in $A_1\cup A_2$ by an analogous argument as previously outlined. It follows that there must be $s^2-1$ cops not in $A_1\cup A_2.$

Hence, there are $n - s \geq 3(s^2-1)$ cops, which we solve for $s$ as $s = \frac{-1}{6}+\sqrt{\frac{n}{3}+\frac{37}{36}}$.
\end{proof}

By Theorems~\ref{mdmain} and \ref{thm:LS_MD_upper}, the metric dimension of a latin square graph of order $n$ will have metric dimension between somewhat below $n$ and up to $2n$. Two different latin squares graphs of the same order may have different metric dimension, so it is possible that the upper and lower bounds are tight.
We proceed by showing that the lower bound is close to being tight.

The \emph{back circulant} latin square $B_n$, is defined as $B_n[i,j] = i+j -1 \pmod{n}$, where we write $n$ instead of $0$ to remain consistent with our typical symbol set $[n]$. For example, if $n=5,$ we have that:
\begin{figure}[h!]
\begin{center}
\begin{align*}B_5=
\begin{tabular}{|c|c|c|c|c|}
\hline 1 & 2 & 3 & 4 & 5 \\ \hline
 2 & 3 & 4 & 5 &1\\ \hline
 3 & 4 & 5 & 1 & 2 \\ \hline
 4 & 5 & 1 & 2 & 3\\ \hline
 5 & 1 & 2 & 3 & 4\\ \hline
\end{tabular} .
\end{align*}
\end{center}
\end{figure}

We need a few definitions. A \emph{resolving set} in a graph $G$ is a set of $\beta(G)$ vertices that the cops can play on to win the localization game in one round. Suppose $L$ is a latin square of order $n$. For a nonnegative integer $d,$ a \emph{partial transversal} of deficit $d$ in $L$ is a subset of $n-d$ entries $T \subseteq L$ such that each row, each column, and each symbol is represented at most once among the entries of $T$. A partial transversal of deficit $d=0$ is called a \emph{transversal}.
\begin{lemma}
For $n$ sufficiently large with $2,3,5,7 \nmid n$, we have that $$\beta(G(B_n))\leq n-1.$$
\end{lemma}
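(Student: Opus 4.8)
The plan is to exhibit an explicit resolving set of size $n-1$. Identify the symbol set $[n]$ with $\mZ_n$ (so $n$ plays the role of $0$), so that $V(G(B_n))=\mZ_n\times\mZ_n$ and $(r,c)\sim(r',c')$ exactly when $r\equiv r'$, $c\equiv c'$, or $r+c\equiv r'+c'\pmod n$. Any two non-adjacent cells $(r,c),(r',c')$ share the common neighbour $(r,c')$, so $G(B_n)$ has diameter $2$; hence every distance vector has entries in $\{0,1,2\}$, a cop at $w$ reporting $0$, $1$, or $2$ according as the robber is at $w$, on a line through $w$, or on no line through $w$. It follows that a vertex set $W$ is resolving if and only if the map $u\mapsto N(u)\cap W$ is injective on $V(G(B_n))\setminus W$, where $N(u)$ denotes the set of neighbours of $u$.

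I would take $W=\{(i,2i):i\in\mZ_n\setminus\{0\}\}$, which has $n-1$ elements. Since $2,3\nmid n$, both $2$ and $3$ are units of $\mZ_n$, and one checks easily that $W$ is a partial transversal of deficit $1$: it meets each row, column, and symbol exactly once, missing only row $0$, column $0$, and the symbol on the diagonal $r+c\equiv 0$. Consequently, for $u=(r,c)\notin W$ the set $N(u)\cap W$ consists precisely of the $W$-entries on those of the three lines through $u$ that $W$ meets, namely $(r,2r)$ from row $r$, $(c/2,c)$ from column $c$, and $((r+c)/3,\,2(r+c)/3)$ from the diagonal $r+c$; a short computation shows that whenever $u\notin W$ these entries, and in particular their first coordinates, are pairwise distinct. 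Writing $k$ for the number of missed lines through $u$, one has $k\in\{0,1,3\}$ (any two of ``$r\equiv 0$'', ``$c\equiv 0$'', ``$r+c\equiv 0$'' imply the third), $|N(u)\cap W|=3-k$, and $k=3$ only for the single cell $(0,0)$.

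The crux is the generic case $k=0$. Here the first coordinates of the three entries of $N(u)\cap W$ form the unordered triple $\{\,r,\ c/2,\ (r+c)/3\,\}$ of distinct elements, and the true labelling $(\alpha,\beta,\gamma)=(r,\,c/2,\,(r+c)/3)$ satisfies $3\gamma=\alpha+2\beta$. The key claim is that, under $2,3,5,7\nmid n$ and $u\notin W$, no other labelling of this triple satisfies $3\gamma=\alpha+2\beta$: running through the remaining five permutations, they impose respectively $\alpha=\beta$, $5(\beta-\gamma)=0$, $4(\alpha-\gamma)=0$, $7(\beta-\gamma)=0$, and $7(\beta-\gamma)=0$; since $4,5,7$ are all coprime to $n$, each case forces two of $r,\,c/2,\,(r+c)/3$ to coincide, and in every instance this amounts to $c\equiv 2r\pmod n$, i.e.\ $u\in W$---a contradiction. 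Hence $(r,c)$ is recovered from $N(u)\cap W$ (via $r=\alpha$, $c=2\beta$), so $u\mapsto N(u)\cap W$ is injective on the $k=0$ cells.

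It remains to treat the exceptional cells. The cell $(0,0)$ is the unique vertex with $N(u)\cap W=\emptyset$, and is automatically distinguished from every other vertex; the $k=1$ cells (those lying on exactly one missed line) are handled by a direct case analysis---distinguishing them from $k=0$ cells is automatic, since $|N(u)\cap W|$ is read off from the number of $1$'s in the distance vector, and distinguishing them from one another again uses $2,3,5\nmid n$ together with $n$ being sufficiently large. Combining the cases shows $W$ is resolving, so $\beta(G(B_n))\le|W|=n-1$. I expect the main obstacle to lie not in the idea but in this verification: checking all six labellings of the triple (where all four excluded primes are genuinely needed, so there is no slack), and carrying out the bookkeeping that rules out collisions among and between the various types of exceptional cells---routine but delicate, and the source of the ``$n$ sufficiently large'' hypothesis.
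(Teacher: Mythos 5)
Your proposal is correct in substance, and it takes a recognizably different route from the paper's. The paper places cops on a full transversal $\{(i,\,n+2-3i,\,n+1-2i): i\in[n]\}$, shows it is resolving by a case analysis in which two cops probing distance $1$ pin the robber to at most six candidate cells, each certified by a distinct third cop (with coincidences ruled out by equations of the form $3i=3j,\,4i=4j,\,5i=5j,\,7i=7j,\,8i=8j,\,9i=9j$ modulo $n$), and then deletes one cop, arguing the missing probe can be inferred. You instead start directly from a deficit-$1$ partial transversal $W=\{(i,2i): i\neq 0\}$, reformulate resolvability via diameter $2$ as injectivity of $u\mapsto N(u)\cap W$, and settle the generic cells by showing that only one assignment of roles (row/column/symbol) to the observed index triple can satisfy $3\gamma=\alpha+2\beta$; your five permutation constraints ($\alpha=\beta$, $5(\beta-\gamma)=0$, $4(\alpha-\gamma)=0$, and twice $7(\beta-\gamma)=0$) are correct and mirror the paper's table, each forcing $c\equiv 2r$, i.e.\ $u\in W$. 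What your approach buys is a cleaner conceptual frame (no deletion step, no tables); what it costs is the exceptional analysis on the three missed lines, which you only sketch. I checked that this part does go through: the $k=1$ cells come in three families with index pairs $\{c/2,c/3\}$, $\{r,r/3\}$, $\{r,-r/2\}$, and all within- and cross-family coincidences reduce to $3r\equiv0$, $4r\equiv0$, $5r\equiv0$, $7r\equiv0$, or $8r\equiv0$, hence are impossible when $\gcd(n,210)=1$. Two small corrections: contrary to your remark, $7\nmid n$ is genuinely needed in the cross-family comparisons (not just $2,3,5$), and no ``$n$ sufficiently large'' hypothesis is actually required beyond the coprimality conditions, so your construction in fact proves the bound for every $n$ with $2,3,5,7\nmid n$.
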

\begin{proof}
We begin by providing a resolving set of cardinality $n$, which we will later show can be reduced to one of cardinality $n-1$. Place cops on the entries $\{ (i,n+2-3i, n+1-2i) : i \in [n]\}$.
As $2,3 \nmid n$, note that these entries form a transversal of $L$, and so if the robber is on a vertex that does not contain a cop, then exactly three cops will probe a distance of $1$ to the robber (that is, one for each index type).
If two particular cops probe a distance of $1$, then there are at most six entries of $L$ that the robber may be on. We will show that for each of these six entries, if chosen by the robber, there will be a distinct third cop of distance $1$ associated with that choice.

Suppose that the first two cops are on entries  $C_i=(i,n+3-3i, n+2-2i)$ and $C_j=(j,n+3-3j, n+2-2j),$ where $i \neq j$.
Table~\ref{tab:robberPossible} provides the six entries that the robber may be on. The first row of this table, for example, says that if the robber chose the entry that is in the same row as $C_i$ and the same column as $C_j$, then the robber is on the entry $(i, n+3-3j, n+2-3j+i)$.
Table~\ref{tab:copPossible} then provides the location of the third cop to also probe a distance of 1, given that the robber was on either of the six entries that were possible. Observe that the cop $D_e$ corresponds to the case that the robber was on the entry associated with the $e$th row of  Table \ref{tab:robberPossible}.
For example, if the robber was on $(i, n+3-3j, n+2-3j+i)$, then the cop $C_k$ with $k=(3j-i)/2$ would have distance 1 to the robber.

\begin{table}[h!]
\begin{center}
\begin{tabular}{|c|c|c|c|}
\hline
Row&Column&Symbol\\
\hline
$i$	&$n+3-3j$	&$n+2-3j+i$ \\
$i$	&$n+3-2j-i	$&$n+2-2j $\\
$j$	&$n+3-3i	$&$n+2-3i+j $\\
$3i-2j$	&$n+3-3i$	&$n+2-2j$\\
$j$	&$n+3-2i-j	$&$n+2-2i$\\
$3j-2i$	&$n+3-3j$	&$n+2-2i$\\
\hline
\end{tabular}
\end{center} \caption{The six possible locations of the robber, given that the cops on row $i$ and row $j$ probe a distance of 1 to the robber.} \label{tab:robberPossible}
\end{table}
\begin{table}
\begin{center}

\begin{tabular}{|c|c|c|c|}
\hline
Cop&Row&Column&Symbol\\
\hline
$D_1$&$2^{-1}(3j-i)$	&$n+3-3(2^{-1}(3j-i))$	&$n+2-3j+i$\\
$D_2$&$3^{-1}(2j+i)$	&$n+3-2j-i$	      		&$n+2-2x3^{-1}(2j+i)$\\
$D_3$&$2^{-1}(3i-j)$	&$n+3-3(2^{-1}(3i-j))$	&$n+2-3i+j$\\
$D_4$&$3i-2j$	                &$n+3-3(3i-2j)$  		&$n+2-2(3i-2j)$\\
$D_5$&$3^{-1}(2i+j)$	&$n+3-2i-j	$      			&$n+2-2x3^{-1}(2i+j)$\\
$D_6$&$3j-2i$	                &$n+3-3(3j-2i)$  		&$n+2-2(3j-2i)$\\
\hline
\end{tabular}
\end{center}
\caption{The entries of the six additional cops that will probe a distance of 1 if the robber is on the corresponding locations given in Table~\ref{tab:robberPossible}.} \label{tab:copPossible}
\end{table}

\begin{table}
\begin{center}
\begin{tabular}{|c|c|c|c|c|c|c|c|c|c|c|c|}
\hline
&$D_1$&$D_2$&$D_3$&$D_4$&$D_5$&$D_6$\\
\hline
$D_1$&--&$5i=5j$&$4i=4j$&$7i=7j$&$7i=7j$&$3i=3j$\\ \hline
$D_2$&--&--&$7i=7j$&$7i=7j$&$i=j$&$9i=9j$\\ \hline
$D_3$&--&--&--&$3i=3j$&$5i=5j$&$7i=7j$\\ \hline
$D_4$&--&--&--&--&$7i=7j$&$5i=5j$\\ \hline
$D_5$&--&--&--&--&--&$8i=8j$\\ \hline
\end{tabular}
\end{center}
\caption{The equation (modulo $n$) that results when we assume that two cops in Table~\ref{tab:copPossible} share the same row.}
\label{tab:compCops}
\end{table}

Finally, Table~\ref{tab:compCops} shows the resulting equation if we assume that cop $D_e=D_f$, by equating the rows that $D_e$ and $D_f$ are in.
As  $2,3,5,7 \nmid n$, each of these conditions would imply that $i=j$, giving a contradiction of assumptions, and so each triple of cops that probe a distance of 1 will uniquely determine the location of the robber.

This completes the proof that the $n$ entries chosen form a resolving set. To show that $n-1$ is sufficient, we may remove any one cop from this resolving set. Hence, either $2$ or $3$ cops will probe a distance of $1$ to the robber.
In the case that $2$ cops probe a distance of 1, we know that the removed cop would have probed a distance of $1$ if we had not removed it. We therefore have the same information as if we had used the resolving set of cardinality $n$, and so the robber's location is uniquely determined.
\end{proof}

\section{Localization number of latin squares}\label{secln}

As the metric dimension is an upper bound on the localization number, by Theorem~\ref{thm:LS_MD_upper} we have the following.

\begin{corollary}\label{corl}
If $L$ is a latin square of order $n,$ then
$\zeta(G(L)) \leq 2n-2$.
\end{corollary}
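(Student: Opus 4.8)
The plan is to observe that this statement is an immediate consequence of two facts already in hand: the general inequality $\loc{G}\le\beta(G)$ recorded in the introduction, and the upper bound on the metric dimension of a latin square graph proved in Theorem~\ref{thm:LS_MD_upper}. Concretely, for a latin square $L$ of order $n\ge 4$ one chains
\[
\loc{G(L)}\le\beta(G(L))\le 2n-2,
\]
where the first inequality holds for every graph — a resolving set is, by definition, a set of vertices the cops may occupy so as to pin down the robber already after round $0$, which is a (very restricted) winning strategy in the localization game — and the second inequality is exactly the content of Theorem~\ref{thm:LS_MD_upper}. So I would simply cite Theorem~\ref{thm:LS_MD_upper} and the observation $\loc{G}\le\beta(G)$, with no additional argument needed.

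There is no real obstacle here: all of the work has already been done inside the proof of Theorem~\ref{thm:LS_MD_upper}, and the step $\loc{G}\le\beta(G)$ requires nothing beyond unwinding the definitions of the two parameters. The only point worth flagging is that, like Theorem~\ref{thm:LS_MD_upper}, the bound should be read for $n\ge 4$; the finitely many smaller orders (where $G(L)$ is $K_1$, $K_4$, or $K_{3,3,3}$) are handled by inspection and fall outside the intended range. If one prefers a self-contained argument that does not invoke Theorem~\ref{thm:LS_MD_upper} as a black box, the very same $2n-2$ cop positions used there — all entries of two chosen columns except the three indicated cells, together with the one correction cop on entry $(r_1,c_2,s_2)$ in the degenerate case — constitute a resolving set, and hence directly witness $\loc{G(L)}\le 2n-2$ by having the cops play exactly those positions in round $0$.
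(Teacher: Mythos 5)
Your proposal is correct and matches the paper's proof exactly: the paper derives the corollary by combining $\loc{G}\le\beta(G)$ with Theorem~\ref{thm:LS_MD_upper}, just as you do. Your remark about the $n\ge 4$ hypothesis is a reasonable caveat but does not change the argument.
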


The bound in Corollary~\ref{corl} may be greatly improved, however. For a latin square $L$, a \emph{cover} of $L$ is a set of entries of the latin square such that each row, column, and symbol is represented at least once. The minimum cardinality of a cover of $L$ is denoted by $\mathrm{mc}(L)$.  The following two results demonstrate that using a little more than $n$ cops, the cops may capture the robber.

\begin{theorem}\label{tmc}
For a latin square $L$ of order $n$, we have that
\[
\zeta(G(L)) \leq \mathrm{mc}(L)+54.
\]
\end{theorem}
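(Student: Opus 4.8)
The plan is to show that a cover of $L$ together with a bounded number of additional cops forms a winning strategy for the cops in the localization game. First I would place cops on the entries of a minimum cover $\mathcal{C}$ of $L$, so that $|\mathcal{C}| = \mathrm{mc}(L)$, and then argue that these cops, after one round of probing, already restrict the robber to a small set of candidate vertices. Indeed, if the robber occupies a vertex $v = (r,c,s)$ and is not itself on a cop, then because $\mathcal{C}$ is a cover, there is at least one cop in row $r$, at least one in column $c$, and at least one on an entry with symbol $s$; each of these probes distance $1$. The key observation is the converse-type statement: a cop at distance $1$ from the robber shares exactly one index (row, column, or symbol) with $v$, since two entries sharing two indices are equal. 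So the robber's row is pinned down to the (at most $\mathrm{mc}(L)$, but really very few) rows of cops probing $1$ via a shared row, and similarly for columns and symbols. The difficulty is that a single cop probe of $1$ does not tell the cops \emph{which} of the three index types is shared, so the candidate set for the robber can still be moderately large after one round.

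The second step is to resolve this ambiguity using a constant number of extra cops and, if necessary, a constant number of extra rounds. After round $0$, let $X$ be the set of cops probing distance $1$; write $X = X_R \cup X_C \cup X_S$ according to whether the shared index with $v$ is a row, column, or symbol (a cop can lie in more than one of these only if two of its indices coincide with $v$'s, which forces equality, so these are essentially disjoint for cops not equal to $v$). The robber's candidate set $T$ consists of entries $v'$ such that: every cop in $X$ shares an index with $v'$, and no cop outside $X$ shares an index with $v'$. I would argue that $|T|$ is bounded by a small absolute constant — this is where the real combinatorial work lies. The point is that if $X_R$ is nonempty then all of $T$ lies in a single row; if $X_C$ is nonempty all of $T$ lies in a single column; if $X_S$ nonempty, in a single symbol-line. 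When at least two of $X_R, X_C, X_S$ are nonempty, $|T| \le 1$ by the latin property and we are done immediately. The genuinely troublesome case is when only one type, say rows, is "active" — meaning every cop in $X$ shares a row with $v$, hence (since these cops are spread across the cover) there is essentially one cop $C_0$ in the robber's row and $X = \{C_0\}$ together possibly with cops that coincidentally lie in the robber's row, column, or symbol; I must bound how many vertices of that one row are simultaneously not-adjacent to the roughly $\mathrm{mc}(L)$ cover cops. This need not be small a priori, so the fix is to add a bounded supplementary set of cops, chosen so that every line of $L$ is "finely covered" — e.g.\ so that on each row, the set of entries missed by the cover-plus-supplement cops has size at most some absolute constant. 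I expect one can do this with at most $54$ extra cops (the explicit constant in the statement strongly suggests a fixed gadget: a small collection of transversals or near-transversals, whose existence for all $n$ is guaranteed by the partial-transversal results invoked later in the paper), so that after round $0$ the candidate set $T$ always has bounded size.

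The third step is the endgame: once $|T|$ is bounded by an absolute constant $t$, the cops finish in a bounded number of further rounds. In each subsequent round the robber must move along a line to a new candidate, and with a constant surplus of mobile cops one can shrink $T$ by probing the neighborhoods of the finitely many possible destinations; standard arguments (as in localization-game proofs) show that $O(\log t)$ rounds, or even a fixed number depending only on $t$, suffice, and since the cops may reuse their positions freely between rounds this costs no additional cops. Assembling the pieces gives $\zeta(G(L)) \le \mathrm{mc}(L) + 54$.

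The main obstacle, as flagged above, is the single-active-type case: controlling the number of entries on one fixed line that evade all cover cops \emph{and} all the supplementary cops, and doing so with a universal constant of extra cops independent of $n$. I would attack this by arranging the supplementary cops as a bounded union of partial transversals with small deficit (whose existence is exactly the kind of result the paper cites for the localization upper bound), so that any fixed line meets this union in all but boundedly many of its entries, forcing $|T| = O(1)$; verifying that the bookkeeping closes with the constant $54$ is then a routine, if slightly tedious, counting check.
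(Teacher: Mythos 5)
Your overall plan (cover cops plus a bounded surplus) superficially matches the paper's, but there are two genuine gaps, and the step you flag as the main obstacle is handled incorrectly. The ``single-active-type'' case you worry about --- that after round $0$ the candidate set could be moderately large because only row-type coincidences occur --- does not arise when the cops sit on a cover: every one of the three lines through the robber's vertex contains a cop, so the set of cops probing $1$ always contains cops sharing the robber's row, its column, \emph{and} its symbol. A short case analysis of how the probing cops distribute over lines (this is the heart of the paper's proof, using that a vertex off a line has at most two neighbours on that line) shows the cover cops alone confine the robber to at most six vertices after the first probe; no supplementary cops are needed in round $0$. Moreover, your proposed remedy is quantitatively impossible: a partial transversal meets each line in at most one entry, so a bounded union of partial transversals (indeed any set of $\mathrm{mc}(L)+O(1)$ cops whatsoever) leaves $n-O(1)$ entries of almost every line uncovered, so no constant-size gadget can make every line ``all but boundedly'' covered. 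Thus the step on which your bound rests is both misdiagnosed and unproved.

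Second, your endgame is asserted rather than proved. Once the robber is confined to a constant-size set $T$, it moves before the next probe, so the set of possible positions expands to the union of the closed neighbourhoods of $T$, of size $\Theta(n)$; ``standard arguments'' with a ``constant surplus of mobile cops'' do not obviously re-localize it, and this is exactly where the constant $54$ actually comes from. In the paper's second round the cops keep the cover and add three cops on each of the at most $6\times 3=18$ lines through the candidates ($54=6\cdot 3\cdot 3$), which guarantees a line with at least four probing cops and narrows the robber to two vertices sharing an index; a third round uses $n+8\le \mathrm{mc}(L)+54$ cops (the whole of one row plus two cops on each of four lines) to determine the robber exactly. Your proof needs both missing ingredients: the constant bound on the round-$0$ candidate set coming from the cover itself, and an explicit multi-round tracking scheme that accounts for the robber's moves between probes.
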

\begin{proof}
Suppose that in the initial round, the robber chooses to occupy the vertex corresponding to entry $(r,c,s)$. The cops then play on the vertices corresponding to the entries in a minimum cover.
Each line containing the robber will also contain a vertex with a cop, by the definition of a cover. There may be multiple vertices that the cop cannot distinguish via their distances, but we will show that the number of such vertices is at most six.

If three or more cops on a common line each probe a distance of $1$, then the robber must be on this line. If there is a second line with two cops that probe a distance of $1$, then the robber must also be on this second line, and the robber's exact location is known. Suppose that this is not the case, and assume without loss of generality the first line was a row-line, of row $r$. As all three lines that contain the robber must contain a cop, there must be two cops that probe a distance of $1$, say on $(r_1,c_1, s_1)$  and $(r_2,c_2, s_2)$, that are known not to share a row with the robber. Thus, the cops can identify that the robber is on one of the two vertices $(r,c_1,s_2)$ or $(r,c_2,s_1)$.

If there are two lines which each contain two cops, then without loss of generality, we may assume that these are row-and column-line, and write the four such cops' vertices as $(r_1, c_1, s_1)$, $(r_1, c_2, s_2)$, $(r_2, c_3, s_3)$, and $(r_3, c_3, s_4)$, where the $r_i$ and $c_i$ are distinct for all choices of $1\le i \le 3.$
The robber must then be on one of the five vertices $(r_1,c_3,s_5)$, $(r_2,c_1,s_2)$ but requiring $s_2=s_4$, $(r_2,c_2,s_1)$ but requiring $s_1=s_4$, $(r_3,c_1,s_2)$ but requiring $s_2=s_3$, and $(r_3,c_2,s_1)$ but requiring $s_1=s_3$.

The remaining case is when there are two cops $C_1$ and $C_2$ that probe $1$ but that each share no index with any other cop that probed one. The robber must be on the intersection of the three lines containing $C_1$ and three lines containing $C_2$.
There are six such vertices.

We have shown that the robber has been found to reside among at most six vertices. The robber takes its next move, and then the cops probe the same vertices as in the previous round but add $54$ cops, placing an additional three cops on each line that contains one of the six vertices that may have contained a robber.
There will always be some line that contains at least four cops that probe a distance of $1$ to the robber, so the robber must be on this line.
Further, there are at least two cops not on this line that also probe a distance of $1$.
Only two vertices on the line may have a distance 1 to these same two cops, hence the robber must be on one of them.

At this stage of the game, the cops know the robber is on one of two vertices, and that these vertices share an index. Without loss of generality, say they share the same row $r$, and that these vertices are labeled by the entries $(r,c_1,s_1)$ and $(r,c_2,s_2)$.
On the next turn of the cops, they play $n$ cops on the $n$ entries in row $r$. Further, an additional two cops play on each the lines of $c_1$, $c_2$, $s_1$, and $s_2$. Note that these moves employ $n+8$ cops. If the robber moved along a row during its last move, then they are now captured, as a cop will probe the exact vertex that the robber would be on.

We may therefore, assume the robber moved along one of the lines of $c_1$, $c_2$, $s_1$, or $s_2$. Without loss of generality, we may assume that the robber was on $(r,c_1,s_1)$ and moved along the line of $c_1$.
The three cops along the line of $c_1$ will probe a distance of $1$, so the cops knows the robber is on column $c_1$. In addition, another cop that is not on $(r,c_1,s_1)$ but is also on row $r$ probes a distance of $1$, and this cop must share its symbol with the robber.
Hence, the cops know the column and symbol of the robber, and so knows the robbers location exactly. We note that $n+8 \leq \mathrm{mc}(L) +54$ since all covers have cardinality at least $n$. The proof of the upper bound follows.
\end{proof}

We have the following asymptotic bound on the localization number of latin square graphs.

\begin{corollary}\label{cormain}
For a latin square $L$ of order $n$, we have that
\[
\zeta(G(L)) \leq n+O\left(\frac{\log{n}}{\log{\log{n}}}\right) .
\]
\end{corollary}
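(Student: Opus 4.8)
The plan is to derive Corollary~\ref{cormain} directly from Theorem~\ref{tmc}, which already reduces the localization number to the minimum cover number via $\zeta(G(L)) \le \mathrm{mc}(L) + 54$. Since $54$ is merely an additive constant, it suffices to establish that every latin square $L$ of order $n$ satisfies
\[
\mathrm{mc}(L) \le n + O\!\left(\frac{\log n}{\log\log n}\right).
\]
Once this is in hand, the corollary follows by substitution, the constant $54$ being absorbed into the error term.

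For the bound on $\mathrm{mc}(L)$ I would appeal to recent work on covers and partial transversals of latin squares, which shows precisely that every latin square of order $n$ has a cover of size $n + O(\log n/\log\log n)$ (and, moreover, that this is close to best possible). Plugging this into Theorem~\ref{tmc} completes the argument. For context I would also record the weaker estimate obtainable from partial transversals alone: a partial transversal of deficit $d$ can be greedily extended to a cover of size at most $n + 2d$, by first adding one entry to cover each of the at most $d$ uncovered rows, then one entry for each of the at most $d$ still-uncovered columns, and finally one entry for each of the at most $d$ still-uncovered symbols. Combined with the Hatami--Shor bound $d = O(\log^2 n)$ this yields only $\mathrm{mc}(L) \le n + O(\log^2 n)$, hence $\zeta(G(L)) \le n + O(\log^2 n)$; this is why the sharper, cover-specific result is the input actually needed for the stated estimate.

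The hard part here is entirely external: all of the substantive content lies in the cited bound on $\mathrm{mc}(L)$, whose proof relies on the delicate combinatorics of covers and partial transversals. Within this paper the remaining work is just bookkeeping — absorbing the additive constant $54$ into the asymptotic term, and, if one wishes to include the partial-transversal route as a fallback, verifying the straightforward greedy extension described above.
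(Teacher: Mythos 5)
Your proposal is correct and follows essentially the same route as the paper: apply Theorem~\ref{tmc} and absorb the constant into the error term, with the cover bound supplied by the covers/partial-transversals literature. The paper simply makes the external input explicit as a two-step combination --- a partial transversal of deficit $O\!\left(\frac{\log n}{\log\log n}\right)$ from \cite{KPSY} converted via \cite{BMSW} into a cover of cardinality $n+d/2$ --- rather than citing a single cover-specific result, so your argument matches the paper's in substance.
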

\begin{proof}
In \cite{KPSY}, it was shown that there exists a partial transversal of cardinality $n-O\big(\frac{\log{n}}{\log{\log{n}}}\big)$, and in
\cite{BMSW} it was shown that a partial transversal of cardinality $n-d$ can be used to construct a cover of cardinality $n+d/2$. Therefore, there exists a cover of cardinality $n+O\big(\frac{\log{n}}{\log{\log{n}}}\big)$ in $L.$ The proof now follows from Theorem~\ref{tmc}.
\end{proof}

We also establish a lower bound on the localization number of MOLS.
\begin{theorem} \label{thm:LocMolsLower}
If $\mathcal{L}$ is a set of $k${\rm-MOLS}$(n)$, then
\[
\zeta(G(\mathcal{L})) \geq \frac{2(n-1)}{k+2}.
\]
\end{theorem}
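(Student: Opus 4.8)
The plan is to exhibit an explicit evasion strategy for the robber whenever $c$ cops are in play with $c(k+2)<2(n-1)$; this forces $\zeta(G(\mathcal{L}))\geq \frac{2(n-1)}{k+2}$.

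First I would record the relevant metric structure of $G(\mathcal{L})$. The graph has diameter $2$, so every probe is $0$, $1$, or $2$, and a probe of $1$ from a cop at $u$ means the robber shares exactly one line with $u$. The key incidence count is the MOLS analogue of the one used in Theorems~\ref{thm:CN_LSG_lower1} and~\ref{lem:CN_LSG_lower2}: a cop not lying on a line $\ell$ is at distance $1$ from exactly $k+1$ of the $n$ cells of $\ell$ (the cells of $\ell$ sharing its column, or one of its $k$ symbols, which are pairwise distinct by orthogonality) and at distance $2$ from the remaining $n-k-1$; a cop lying on $\ell$ is at distance $1$ from every other cell of $\ell$.

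The core of the proof is a static statement: if $c$ cops occupy vertices of $G(\mathcal{L})$ with $c(k+2)<2(n-1)$, then every line $\ell$ contains two distinct cop-free cells having the same distance vector to all $c$ cops. To prove this, fix $\ell$, let $p$ be the number of cops on $\ell$ and $q=c-p$ the number off it. By the count above, two cop-free cells of $\ell$ have the same distance vector precisely when they are ``marked'' by the same set of off-$\ell$ cops, since every on-$\ell$ cop is at distance $1$ from both. Each off-$\ell$ cop marks $k+1$ cells of $\ell$, so double counting cop--cell incidences on the $n-p$ cop-free cells shows that either at least two of them are marked by no cop (two cells at distance $2$ from all cops, and we are done), or more than $q$ of them are marked by exactly one cop. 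In the latter case two such cells are marked by the same cop and hence have the same distance vector. The hypothesis $c(k+2)<2(n-1)$ enters exactly here: together with $p(k+2)\geq 2p$ it yields $q(k+2)\leq c(k+2)-2p<2(n-p-1)$, which is what is needed to push the count of singly-marked cells above $q$ (and, since $c<n-1$, guarantees $n-p\geq 2$ so that two cop-free cells exist at all).

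Given this, the robber fixes a single line $\ell$ at the start and maintains the invariant that after every round the set of consistent positions contains two distinct cells of $\ell$. At round $0$ this is arranged by placing the robber on one of the two confusable cells of $\ell$ provided by the static statement. It is preserved because the two currently-consistent cells lie on $\ell$, so all of $\ell$ lies in the closed neighbourhood of the current shadow; the robber may therefore move to either of the two confusable cells that the static statement provides for the new cop positions, keeping both inside the shadow. Thus the shadow never drops below two vertices and the cops never locate the robber. I expect the only real subtlety to be this last bookkeeping point --- verifying that the pair produced each round is reachable from the previous one --- which is precisely why the escape line $\ell$ should be pinned down once and for all rather than allowed to drift from round to round.
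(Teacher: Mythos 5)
Your proposal is correct and is essentially the paper's argument: both reduce to the same double count along a single line --- an off-line cop is at distance $1$ from exactly $k+1$ of its cells, at most one cop-free cell can sit at distance $2$ from all off-line cops and at most one per cop can be the unique cell it alone marks, which forces $(\text{off-line cops})\cdot(k+2)\ge 2(\text{cop-free cells}-1)$ and hence $c(k+2)\ge 2(n-1)$. The only difference is packaging: the paper lets the robber reveal its row and analyzes a single round (minimizing over the number of cop-free cells of that row), whereas you fix the line once and maintain a two-cell indistinguishable shadow on it, which if anything treats the round-to-round bookkeeping more explicitly than the paper does.
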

\begin{proof}
We play the game with $c$ cops, and derive a lower bound on $c$ such that these $c$ cops can capture the robber (with $c$ to be determined later). Suppose that the robber was not located during the cops' last turn, and after its turn, the robber informs the cops that the robber is on the entries of some given row, say the set of $n$ entries $A$. This weakens the strategy for only the robber player, and so reduces the number of cops required to capture the robber. Note that if the cops cannot capture the robber on this turn, independent of which row the robber is located, then the cops will never be able to capture the robber in the standard game. Thus, a lower bound on $c$ such that $c$ cops are required to capture the robber during this single round will be a lower bound on $\zeta(G(\mathcal{L}))$.

\begin{figure}[h!]
\begin{center}
\begin{tikzpicture}
\draw [decorate,
    decoration = {calligraphic brace,
        amplitude=5pt}] (0,0.8) --  (6,0.8)
        node[pos=0.5,above=5pt, black]{$A$};
\draw [decorate,
    decoration = {calligraphic brace,
        amplitude=5pt}] (2,0.1) --  (6,0.1)
        node[pos=0.5,above=5pt, black]{$A'$};
\draw[thick] (0,0)--(6,0);
\draw[thick] (2,0)--(2,-0.5);
\draw[thick] (0,-0.5)--(6,-0.5);
\draw[thick] (0,0)--(0,-1);
\draw[thick] (6,-1)--(6,0);
\draw[thick, dashed] (6,-1)--(6,-1.5);
\draw[thick, dashed] (0,-1.5)--(0,-1);
\draw (.25,-.25) circle (0.1cm);
\draw (.75,-.25) circle (0.1cm);
\draw (1.25,-.25) circle (0.1cm);
\draw (1.75,-.25) circle (0.1cm);

\draw (2.25,-.75) circle (0.1cm);
\draw[very thick, dotted] (2.25,-.75)--(2.25,-.25);
\draw[very thick, dotted] (2.25,-.75)--(2.75,-.25);
\draw[very thick, dotted] (2.25,-.75)--(3.25,-.25);
\draw (3.25,-1.00) circle (0.1cm);
\draw[very thick, dotted] (3.25,-1)--(2.75,-.25);
\draw[very thick, dotted] (3.25,-1)--(3.75,-.25);
\draw[very thick, dotted] (3.25,-1)--(4.25,-.25);
\draw (4.25,-1.25) circle (0.1cm);
\draw[very thick, dotted] (4.25,-1.25)--(3.25,-.25);
\draw[very thick, dotted] (4.25,-1.25)--(3.75,-.25);
\draw[very thick, dotted] (4.25,-1.25)--(4.75,-.25);
\draw (5.25,-1.25) circle (0.1cm);
\draw[very thick, dotted] (5.25,-1.25)--(4.25,-.25);
\draw[very thick, dotted] (5.25,-1.25)--(4.75,-.25);
\draw[very thick, dotted] (5.25,-1.25)--(5.25,-.25);
\end{tikzpicture}
\end{center}
\caption{Eight cops attempting to locate a robber along a single row of entries from a set of $2${\rm-MOLS}$(11)$.}
\label{fig:MOLS_loca}
\end{figure}
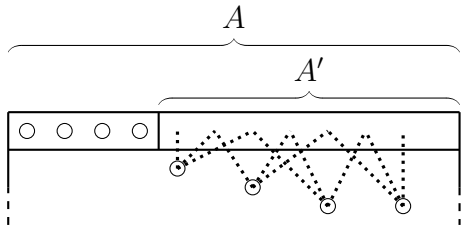

Each cop is either on an entry in $A$, or it is not on $A$ and is adjacent to $k+1$ entries in $A$.
Let $A'$ denote the entries of $A$ that do not contain cops.
Each vertex on $A \setminus A'$ has distance $1$ to each vertex in $A'$, so cannot distinguish which vertex the robber is on if the robber is on a vertex of $A'$.
Let $C'$ denote the set of vertices containing the remaining cops on vertices not on $A$, which have some hope of distinguishing the remaining vertices of $A'$, and let $c' = |C'|$.
See Figure~\ref{fig:MOLS_loca}, which depicts a case with $k=2$, $n=11$, and where eight cops are at play.

Suppose the cops are able to determine the location of the robber on this turn.
Each vertex in $A\setminus A'$ can be immediately localized, as these entries contain a cop, which will probe a distance of $0$.
There can be at most one vertex in $A'$ of distance $2$ to all cops in $C'$.
For each of the cops in $C'$, there can be at most one vertex in $A'$ of distance $1$ to this cop and distance $2$ to all other cops in $C'$.
The most optimal situation for the cops is when each cop is adjacent to exactly one vertex in $A'$ that has the property of being distance  $2$ to all other cops in $C'$, so we assume that this is the case.

We therefore have that $1+c'$ entries in $A'$ have distance $1$ to one or zero cops.
The remaining $|A'|-c'-1$ such vertices must each be adjacent to two cops each.
Label the edges that directly connect these $|A'|-c'-1$ vertices to the cops as $E$.
This means that $E$ contains at least $2(|A'|-c'-1)$ edges.
Each cop is adjacent to at most $k$ such vertices, so $|E| \leq c'k$.
Thus, we must have  $2(|A'|-c'-1) \leq |E| \leq c'k$, and so $$\frac{2(|A'|-1)}{k+2} \leq c'.$$

The total number of cops used is $$c = n-|A'| + c' \geq n - |A'| +  \frac{2(|A'|-1)}{k+2},$$ which is minimized when $|A'|=n$, yielding $c \geq \frac{2(n-1)}{k+2}.$ The proof follows.
\end{proof}

When $k$ is close to $n$, the lower bound in Theorem~\ref{thm:LocMolsLower} does not apply. In certain cases, when $k \geq n/2$, we may substantially improve the lower bound by observing certain properties of the set of MOLS.
An \emph{orthogonal array} OA$(k+2,n)$ is a $(n^2) \times (k+2)$ array, with cells filled with symbols in $[n]$ such that the subarray formed by taking any two columns contain each pair in $[n] \times [n]$ precisely once.
We say that two rows of an orthogonal array \emph{intersect} in a column if both cells of that column in the two rows contain the same symbol.
We note that there is a one-to-one correspondence between a set of $k$-MOLS$(n)$ and an orthogonal array OA$(k+2,n)$; see \cite{stinson}.

\begin{theorem} \label{thm:PairMolsEquiv}
If $\mathcal{M}$ is a set of $k${\rm-MOLS}$(n)$ and $\mathcal{N}$ is a set of $(n-1-k)${\rm-MOLS}$(n)$ such that the composition of the orthogonal arrays of $\mathcal{M}$ and $\mathcal{N}$ is the orthogonal array of a set of $(n-1)${\rm-MOLS}$(n)$, then
\[
\zeta(G(\mathcal{M})) = \zeta(G(\mathcal{N})).
\]
\end{theorem}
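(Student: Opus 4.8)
The plan is to reformulate everything inside the affine plane determined by the MOLS. Since the composition of the two orthogonal arrays is the orthogonal array of a complete set of $(n-1)$-MOLS$(n)$, the $n^2$ cells are exactly the points of an affine plane of order $n$, and its $n+1$ parallel classes of lines split into the common row-class $\mathcal{R}$ and column-class $\mathcal{C}$, a family $\mathcal{P}_M$ of $k$ classes contributed by $\mathcal{M}$, and a family $\mathcal{P}_N$ of $n-1-k$ classes contributed by $\mathcal{N}$. Any two distinct cells lie on exactly one common line, so two cells are adjacent in $G(\mathcal{M})$ exactly when that common line belongs to a class in $\{\mathcal{R},\mathcal{C}\}\cup\mathcal{P}_M$, and adjacent in $G(\mathcal{N})$ exactly when it belongs to a class in $\{\mathcal{R},\mathcal{C}\}\cup\mathcal{P}_N$. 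In particular both graphs are latin-square-type strongly regular graphs, so they have diameter two, and, crucially, $V(G(\mathcal{M}))=V(G(\mathcal{N}))$ as labelled cell sets.

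First I would use the diameter-two property to reinterpret a probe. A cop on cell $u$ that probes a robber on cell $v$ receives $0$ if $v=u$, receives $1$ if the unique common line of $u$ and $v$ lies in a class used by the graph, and receives $2$ otherwise. Hence a probe of $G(\mathcal{M})$ from $u$ and a probe of $G(\mathcal{N})$ from the same vertex $u$ both sort the cells according to which line through $u$ contains the robber; they disagree only in that $G(\mathcal{M})$ bundles the lines of $\mathcal{R}$ and $\mathcal{C}$ together with those of $\mathcal{P}_M$, whereas $G(\mathcal{N})$ bundles them together with those of $\mathcal{P}_N$. Since the cops may freely relabel the numerical outcomes of their probes, the two probe functions on this common vertex set are ``almost the same'', and this near-coincidence is the engine of the argument.

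The main step is a round-by-round transfer of a winning strategy, carried out in both directions; I sketch the direction $\zeta(G(\mathcal{M}))\le\zeta(G(\mathcal{N}))$. Fix a winning strategy for $c$ cops on $G(\mathcal{M})$. The cops playing on $G(\mathcal{N})$ shadow it, maintaining the invariant that their candidate set $T$ for the robber never exceeds the candidate set in the shadowed $G(\mathcal{M})$-play: each round they occupy the cells the $G(\mathcal{M})$-strategy prescribes and then decode their $G(\mathcal{N})$-probes into the information a $G(\mathcal{M})$-probe would have given. This decoding is exact once $T$ lies inside a single line, because then the row/column bundling is irrelevant, so the plan is to shadow faithfully until the $G(\mathcal{M})$-strategy has confined $T$ to a line and then finish by a direct calculation. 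To keep the shadowing faithful before that point, one arranges that the prescribed cop placements be generic with respect to both $\mathcal{P}_M$ and $\mathcal{P}_N$ — if necessary, reserving a few cops to pin down the robber's row and column through the shared rook structure $K_n\,\square\,K_n$ — so that each $G(\mathcal{N})$-probe can be translated into its $G(\mathcal{M})$-counterpart. The mismatch between the two robbers' movement options is handled by the same observation: after a robber move from a set $T$ contained in a line $\ell$, the resulting candidate set is determined by $\ell$ and by the classes meeting the cells of $\ell$, which are the very same objects in $G(\mathcal{M})$ and in $G(\mathcal{N})$.

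The hard part will be running this transfer with no increase in the number of cops. A single $G(\mathcal{N})$-probe need not carry exactly the information of a single $G(\mathcal{M})$-probe, since the row and column classes are bundled on the ``wrong'' side, and one cannot compensate by adding auxiliary cops in a proof of an exact equality. I expect this to require showing that, among the cop configurations that the chosen $G(\mathcal{M})$-strategy actually produces, the row/column ambiguity is always already resolved by the other probes — or, failing that, modifying the strategy so that it is — possibly together with a symmetry step that normalizes the remaining choices. The reverse inequality $\zeta(G(\mathcal{N}))\le\zeta(G(\mathcal{M}))$ is then the identical argument with the roles of $\mathcal{P}_M$ and $\mathcal{P}_N$ interchanged.
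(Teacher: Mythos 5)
Your proposal does not reach a proof: the decisive step is explicitly left open. You correctly reduce a probe in either graph to the question of which line through the cop's cell contains the robber, with the two graphs merely bundling the parallel classes differently into ``distance $1$'' versus ``distance $2$,'' and you correctly flag that in your model the row and column classes sit on the same side of the bundling in both graphs, so a single $G(\mathcal{N})$-probe from a cell does not carry the same information as a single $G(\mathcal{M})$-probe from that cell. But the strategy transfer you then outline is only a plan: the decoding of $G(\mathcal{N})$-probes into $G(\mathcal{M})$-information is claimed exact only once the candidate set lies inside a single line, and before that point you defer to making placements ``generic,'' to ``reserving a few cops'' (not available when proving an exact equality), or to ``modifying the strategy,'' none of which is carried out. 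Since you yourself name this as the hard part and do not resolve it, neither inequality between $\zeta(G(\mathcal{M}))$ and $\zeta(G(\mathcal{N}))$ is actually established.

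The paper avoids strategy transfer altogether. It recasts the game on the orthogonal arrays: cops and robber occupy rows of $\mathcal{O}(\mathcal{M})$ (respectively $\mathcal{O}(\mathcal{N})$), and a probe returns $0$, $1$, or $2$ according to whether the two rows are equal, intersect, or do not intersect. Writing the two arrays side by side so that their composition is the array $\mathcal{O}(\mathcal{L})$ of a complete set, any two distinct rows of $\mathcal{O}(\mathcal{L})$ intersect in exactly one column, and that column lies in $\mathcal{O}(\mathcal{M})$ or in $\mathcal{O}(\mathcal{N})$ but not both; hence every distance vector arising in one game is obtained from the corresponding vector in the other by swapping $1$'s and $2$'s, the cops' information is identical round by round, and the localization numbers coincide — no candidate-set bookkeeping, no finishing argument along a line, no extra cops. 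Note that this exact complementarity of probes is precisely what fails in your affine-plane model, where you let both graphs retain the same row and column parallel classes; the paper's hypothesis and proof instead treat the two arrays' column sets as disjoint within the composition, which is what makes the one-column-intersection argument go through. To salvage your approach you would either have to adopt that reading of the hypothesis (after which your ``almost the same'' becomes ``complementary'' and the transfer is immediate) or genuinely supply the missing simulation argument for the shared classes.
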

\begin{proof}
Let $\mathcal{O}(\mathcal{M})$ and $\mathcal{O}(\mathcal{N})$ denote the orthogonal arrays corresponding to $\mathcal{M}$ and $\mathcal{N}$, respectively.
We write both of these arrays such that the side-by-side composition of the two arrays forms the orthogonal array of a set of $(n-1)${\rm-MOLS}$(n)$, say $\mathcal{O}(\mathcal{L})$.
If a cop $C$ probes a distance of $1$ to the robber $R$ on $\mathcal{M}$, then the rows of $\mathcal{O}(M)$ that correspond to the entries of $R$ and $C$ will intersect, and since the corresponding two rows in $\mathcal{O}(\mathcal{L})$ can only intersect in one column, the two corresponding rows in $\mathcal{O}(\mathcal{N})$ do not intersect.
Similarly, if a cop $C$ probes a distance of $2$ to the robber on $R$ on $\mathcal{M}$, then the rows of $\mathcal{O}(\mathcal{M})$ that correspond to the entries of $R$ and $C$ do not intersect, and since the corresponding two rows in $\mathcal{O}(\mathcal{L})$ do intersect, the two corresponding rows in $\mathcal{O}(\mathcal{N})$ must also intersect.
Equivalent statements hold for $\mathcal{N}$.

We can define a localization game on $\mathcal{O}(\mathcal{M})$ similar to the localization game on graphs, except where the following rules apply.
\begin{enumerate}
\item The cops and robber are placed on rows of the orthogonal array; and
\item The distance between a cop and robber is $0$ if they are on the same row, $1$ if their rows intersect, and $2$ if their rows do not intersect.
\end{enumerate}
By our observations in the first paragraph of this proof, the regular localization game on $G(\mathcal{M})$ is equivalent to playing the new localization game on $\mathcal{O}(\mathcal{M})$.
An equivalent statement holds for $\mathcal{N}$.

By our observations in the first paragraph of this proof, the distance vectors obtained while playing the new localization game on  $\mathcal{O}(\mathcal{M})$ will differ from the distance vectors obtained while playing the new localization game on  $\mathcal{O}(\mathcal{N})$ only in that the $1$'s will be mapped to $2$'s, and vice versa.
Thus, the information that the cops receive is equivalent, independent of whether the game is played on $\mathcal{O}(\mathcal{M})$ or $\mathcal{O}(\mathcal{N})$.
As such, playing the localization game on both $\mathcal{O}(\mathcal{M})$ and $\mathcal{O}(\mathcal{N})$ are equivalent. Since these games were equivalent to the localization game played on $G(\mathcal{M})$ and $G(\mathcal{N})$, we have the desired result that $\zeta(G(\mathcal{M})) = \zeta(G(\mathcal{N}))$.
\end{proof}

By combining Theorems~\ref{thm:LocMolsLower} and \ref{thm:PairMolsEquiv} we derive following result, which is an improvement when $k\geq n/2$. If $i<j,$ then a set $\mathcal{M}$ of $i$-MOLS$(n)$ is \emph{completable} to a set of $j$-MOLS$(n)$ if symbols may be added to $\mathcal{M}$ to form a $j$-MOLS$(n).$

\begin{corollary} \label{cor:MOLS_loc_final}
If $\mathcal{M}$ is a set of $k${\rm-MOLS}$(n)$ that is completable to  a set of $(n-1)${\rm-MOLS}$(n)$, then
\[
\zeta(G(\mathcal{M})) \geq \frac{2(n-1)}{n-k+1}.
\]
\end{corollary}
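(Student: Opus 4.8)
The plan is to deduce Corollary~\ref{cor:MOLS_loc_final} by combining Theorem~\ref{thm:PairMolsEquiv} with the lower bound of Theorem~\ref{thm:LocMolsLower}. First I would observe that a set $\mathcal{M}$ of $k$-MOLS$(n)$ that is completable to a set of $(n-1)$-MOLS$(n)$ gives, in the language of orthogonal arrays, an OA$(k+2,n)$ that sits inside an OA$(n+1,n)$: the completion amounts to adjoining $n-1-k$ further latin squares (equivalently, $n-1-k$ further columns to the orthogonal array) so that the whole array is the orthogonal array of a set of $(n-1)$-MOLS$(n)$. Let $\mathcal{N}$ denote the set of $(n-1-k)$-MOLS$(n)$ formed by those adjoined squares. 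Then the side-by-side composition of the orthogonal arrays of $\mathcal{M}$ and $\mathcal{N}$ is precisely the orthogonal array of a set of $(n-1)$-MOLS$(n)$, so $\mathcal{M}$ and $\mathcal{N}$ satisfy the hypothesis of Theorem~\ref{thm:PairMolsEquiv}, and hence $\zeta(G(\mathcal{M})) = \zeta(G(\mathcal{N}))$.

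Next I would apply Theorem~\ref{thm:LocMolsLower} to $\mathcal{N}$, which is a set of $(n-1-k)$-MOLS$(n)$: with $k' = n-1-k$ in that theorem, we obtain
\[
\zeta(G(\mathcal{N})) \geq \frac{2(n-1)}{k'+2} = \frac{2(n-1)}{(n-1-k)+2} = \frac{2(n-1)}{n-k+1}.
\]
Chaining this with the equality $\zeta(G(\mathcal{M})) = \zeta(G(\mathcal{N}))$ from the previous paragraph yields $\zeta(G(\mathcal{M})) \geq \frac{2(n-1)}{n-k+1}$, which is exactly the claimed bound. The remark in the statement that this is an improvement when $k \geq n/2$ then follows by comparing $\frac{2(n-1)}{n-k+1}$ (the new bound) with $\frac{2(n-1)}{k+2}$ (the bound of Theorem~\ref{thm:LocMolsLower} applied directly to $\mathcal{M}$): the former is larger exactly when $n-k+1 < k+2$, i.e. when $k > (n-1)/2$.

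The only real subtlety — and the step I would be most careful about — is checking that ``completable to a set of $(n-1)$-MOLS$(n)$'' really does supply a partner set $\mathcal{N}$ meeting the orthogonal-array composition hypothesis of Theorem~\ref{thm:PairMolsEquiv}, rather than merely some weaker completion. Here one uses the standard dictionary (see \cite{stinson}): a set of $m$-MOLS$(n)$ corresponds to an OA$(m+2,n)$, and appending latin squares to the MOLS set corresponds to appending columns to the orthogonal array while preserving the defining orthogonality of every pair of columns. Thus a completion of $\mathcal{M}$ to $(n-1)$-MOLS$(n)$ is exactly an OA$(n+1,n)$ whose first $k+2$ columns form $\mathcal{O}(\mathcal{M})$; taking the remaining $n-1-k$ ``symbol'' columns together with the two ``coordinate'' columns (rows and columns of the square) reconstitutes an OA$((n-1-k)+2,n) = \mathcal{O}(\mathcal{N})$, and the side-by-side composition (identifying the shared row- and column-coordinate columns) is $\mathcal{O}(\mathcal{L})$ for the full $(n-1)$-MOLS$(n)$. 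With this identification in hand, Theorem~\ref{thm:PairMolsEquiv} applies verbatim and the corollary follows. No delicate estimates are needed beyond the arithmetic simplification of the index already displayed above.
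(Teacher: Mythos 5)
Your proposal is correct and is exactly the paper's argument: the corollary is stated there as an immediate combination of Theorem~\ref{thm:PairMolsEquiv} (applied to $\mathcal{M}$ and the partner set $\mathcal{N}$ of $(n-1-k)$-MOLS$(n)$ supplied by the completion) with Theorem~\ref{thm:LocMolsLower} applied to $\mathcal{N}$, giving $\zeta(G(\mathcal{M}))=\zeta(G(\mathcal{N}))\geq \frac{2(n-1)}{(n-1-k)+2}=\frac{2(n-1)}{n-k+1}$. Your extra care in verifying that completability yields the orthogonal-array composition hypothesis is a correct elaboration of the same route, not a different one.
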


It is well-known that $(n-1)$-MOLS$(n)$ exist when $n$ is a prime power; see for example, \cite{stinson}. Thus, Corollary~\ref{cor:MOLS_loc_final} shows that when $n$ is a prime power and $k$ is close to $n$, that a set of $k$-MOLS$(n)$ exists such that the localization number is large. In particular, if $k=c$ or $k=n-c,$ where $c$ is a constant, then a set $\mathcal{M}$ of $k$-MOLS$(n)$ exists such that $\zeta(G(\mathcal{M}))=\Theta(n)$.

\section{Future Directions}

We determined the precise cop number of $k$-MOLS$(n)$ when $n > (k+1)^2.$ However, several other cases remain unresolved. For instance, it is unclear whether the bound on the cop number stated in Theorem~\ref{lem:CN_LSG_lower2} is tight. In Sections~\ref{secmd} and~\ref{secln}, for a latin square $L$ of order $n,$ we established the bounds $$n-\sqrt{\frac{n}{3}+\frac{37}{36}}+ \frac{1}{6} \le \beta(G(L)) \le 2n-2,$$ and $$\frac{2}{3}(n-1) \le \zeta(G(L)) \leq n+O\left(\frac{\log{n}}{\log{\log{n}}}\right).$$ We do not know if these bounds are tight.

There are many other graph parameters in pursuit-evasion besides those studied in this paper, such as the $0$-visibility cop number \cite{xue}, the search number \cite{nisses}, and the burning number \cite{abburn}. We will investigate these and other pursuit-evasion parameters on latin square graphs in future work.

\section{Acknowledgements} The second author acknowledges funding from an NSERC Discovery Grant. The first, third, fourth, and six authors conducted research for the paper within the 2021 Fields Undergraduate Summer Research Program. The fifth author was supported by funds from NSERC and The Fields Institute for Research in Mathematical Sciences.

\end{document}